\def\R{{\mathbb R}}
\def\dd{{\mathrm d}}
\def\N{{\mathbb N}}
\def\PP{{\mathbb P}}
\def\cU{{\mathcal U}}
\def\cB{{\mathcal B}}
\def\cH{{\mathcal H}}
\def\cF{{\mathcal F}}
\def\dd{{\rm d}}
\newtheorem{thm}{Theora}[section]
\newtheorem{theo}[thm]{Theorem}
\newtheorem{hypothesis}[thm]{Hypothesis}
\newtheorem{prop}[thm]{Proposition}
\newtheorem{Def}[thm]{Definition}
\newtheorem{rem}[thm]{Remark}
\newtheorem{example}[thm]{Example}
\begin{document}

 \title{Explicit invariant measures for infinite dimensional SDE driven by L\'evy noise with dissipative nonlinear drift I}
  \def\lhead{S.\ ALBEVERIO, B. Smii, L.\ Di Persio} 
  \def\rhead{Invariant measures for stochastic differential equations driven by L\'evy noise} 


\author{Sergio Albeverio\footnote{Dept. Appl. Mathematics, University of Bonn, HCM, BiBoS, IZKS. {\tt albeverio@uni-bonn.de}} %
  \\
  Luca Di Persio\footnote{University of Verona, Department of Computer Science, strada Le Grazie, 15, Verona,
    Italia. {\tt luca.dipersio@univr.it}}
    \\
  Elisa Mastrogiacomo\footnote{Universit\'a degli Studi di Milano Bicocca,
   Dipartimento di Statistica e Metodi Quantitativi,
   Piazza Ateneo Nuovo, 1 20126 Milano
    Italia. {\tt elisa.mastrogiacomo@unimib.it}}
   \\
 Boubaker Smii\footnote{King Fahd University of
Petroleum and Minerals, Dept. Math. and Stat., Dhahran 31261, Saudi
Arabia. \hspace*{5mm}{\tt boubaker@kfupm.edu.sa}}}
\date{}
\maketitle

\begin{abstract}
We stu\dd y a class of nonlinear stochastic partial differential
equations with dissipative nonlinear drift, driven by L\'evy noise.
Our work is divided in two parts. In the present part I we first
define a Hilbert-Banach setting in which we can prove existence and
uniqueness of solutions under general assumptions on the drift and
the L\'evy noise. We then prove a decomposition of the solution
process in a stationary component and a component which vanishes
asymptotically for large times in the $L^p-$sense, $p\geq1$. The law
of the stationary component is identified with the unique invariant
probability measure of the process.

In part II we will exhibit the invariant measure as the limit of
explicit invariant measures for finite dimensional approximants.

We shall present a general discussion of such explicit invariant
measures involving, in particular, ground state transformation for
L\'evy driven processes and relations to invariant measures for
Ornstein-Uhlenbeck like processes with L\'evy noise.

Examples and applications are also provided.

\end{abstract}
\vskip0.2cm \noindent {\bf Key words:} {SPDEs, dissipative drift,
invariant measures, dissipative systems, process driven by L\'evy
noise, stationary components, Gibbs-type measures, explicit
invariant measures, ground state transformation.}
\section{Introduction}
Stochastic differential equations for processes with values in
infinite dimensional spaces have been studied in the literature
under different assumptions on the coefficients and the driving
noise term. They are intimately related with stochastic partial
differential equations, looking upon the processes as taking values
in
 the infinite dimensional state space expressing their dependence on the space variable. Among the by now numerous books on these topics for Gaussian noise let us
  mention \cite{DPZVerde,DPZRosso}, \cite{GaMa}, \cite{PRRO}, \cite{Holden-et-all}, see also, e.g., \cite{BrzPe}, \cite{Marin,MarQuer}, \cite{RozMi}. For the case of non Gaussian noises see \cite{PeZa},
  \cite{MandrekarRudiger}. \\
In \cite{ALEBOU} a stu\dd y was initiated concerning a class of non
linear stochastic differential equations with L\'{e}vy noise and a
drift term consisting of a linear unbounded space-dependent part
(typically a Laplacian) and an unbounded non linear part of the
dissipative type and of at most polynomial growth at infinity.
 \\
This class is of particular interest since it contains the case of
Fitz Hugh Nagumo equations with space dependence, on a bounded
domain of $\mathbb{R}^n$ or on bounded networks with 1-dimensional
edges. Such equations are of interest in a number of areas including
neurobiology and physiology, see, e.g., \cite{AlDiP},
\cite{Tu1,Tu2}, \cite{Tu}. It is also related with the stochastic
quantization equation in quantum field theory, mostly studied with
Gaussian noise, see \cite{JoMi}, \cite{S.Mitter},
\cite{AlRo},\cite{DaPraDeb}, \cite{DaPTu}, \cite{ALKR},
\cite{AKaMi}, and references therein. See also \cite{ALGYo} for
related equations with L\'{e}vy noise. For the SPDE equations of the
FitzHug-Nagumo type with L\'evy noise
 studied in \cite{ALEBOU}, existence and uniqueness of solutions was proven, as well as existence and uniqueness of invariant measures. Moreover asymptotic small
 noise expansion for the solutions have been established in the same paper \cite{ALEBOU}. \\
In the present paper we further stu\dd y the framework set up in
\cite{ALEBOU} and provide a decomposition of the solution process in
the sum of a stationary component and a component which vanishes
asymptotically for large times in the $L^p$-sense, $1\le p<\infty$,
with respect to the underlying
 probability measure. \\
The law of the stationary component is then identified with the unique invariant measure of the solution process. \\
This is a an extension of a result that was proved before in the case of Gaussian noise in \cite{ADPM}. \\
In part II we shall show that the invariant measure is the limit of
explicitly given finite dimensional  measures and we relate them in
certain cases to the
invariant measure for an Ornstein-Uhlenbeck process driven by a
finite dimensional L\'{e}vy process in $\mathbb{R}^n$.

We shall provide a general discussion of explicit invariant measures
for L\'evy driven processes in finite dimensions extending in
particular the linear drift case of \cite{Sat91, SY84} to the case
of non lilnear drift.

For the derivation of these relations we shall use an adaptation to
the case of L\'{e}vy noise of methods developed before in a
diffusion setting (with Gaussian noise) which go under the
name of $h$-transform, see \cite{Dy}, or ``ground state transformation'', see, e.g.,  \cite{AHKS} and  \cite{ReSi}. \\
In the finite dimensional case this has been discussed  in
\cite{A-Cufaro}, where the general case
of finite dimensional L\'{e}vy noise has been considered. \\
We relate in particular the work in \cite{A-Cufaro} to work on
solutions of martingale problems and relations to weak solutions of
L\'evy driven SDE.
Our explicit invariant measures in finite dimensions considerable extend previously known examples in \cite{ABRW}, \cite{BaNie}. \\
The structure of part I of this series of two papers is as follows. \\
In Section 2 the setting for the infinite dimensional stochastic differential equations with linear drift and  L\'{e}vy noise is described. Cylindrical L\'evy processes are hereby introduced and the invariant measure is discussed.\\
In Section 3 the results on existence and uniqueness of solutions of such equations with non linear drift and of corresponding invariant probability measure  are recalled. \\
In Section 4 the basic theorem giving the additive decomposition of
the solution process in a stationary and an asymptotically small
component is formulated and proven. Its proof uses a double indexed
finite dimensional approximant, one, $m$, referring to a Yosida
approximation of the non linear term (which we allow to
be non globally Lipschitz, typically with polynomial growth at infinity!) and the other one, $n$, referring to $n$-dimensional approximations of the SPDE. \\
In the proof results on stochastic convolution (referring to \cite{PeZa}) and Gronwall's type estimates are exploited. \\

Applications to the study of models described by PDE's of the
FitzHugh-Nagumo type with L\'{e}vy noise will be presented in part
II.

\section{The infinite dimensional Ornstein-Uhlenbeck process driven by L\'evy noise}\label{sec:OU}
There is an increasing interest in the study of stochastic evolution
equations driven by L\'evy noise. In this section we concentrate on
the linear stochastic differential equation
\begin{equation}\label{eq:OU}
\begin{aligned}
   &\dd X(t)= A X(t) \dd t + \dd L(t), \qquad t\geq 0,\\
   & X(0)=x \in \cH,
\end{aligned}
\end{equation}
where $\cH$ is a real separable Hilbert space, $(L(t))_{t\geq 0}$ is
an infinite dimensional cylindrical symmetric L\'evy process and $A$
is a self-adjoint operator generating a  $C_0$-semigroup. In
particular we are going to recall a few results concerning the
well-posedness of the above equation (Sec. \ref{sec:cylLevy}) and
the existence and uniqueness of explicit invariant measures (Sec.
\ref{ssec:invOU}), in the case where $A$ satisfies Hypothesis
\ref{hp:A} below, mainly following \cite{PrZa}.

\subsection{Cylindrical L\'evy process}\label{sec:cylLevy}
Let us recall that a L\'evy process $(L(t))_{t\geq 0}$ with values
in a real separable Hilbert space $\cH$ is a process on some
stochastic basis $(\Omega,\cF,(\cF_t)_{t\geq 0},\mathbb{P})$, having
stationary independent increments, c\`adl\`ag trajectories in $\cH$,
and such that $L(0)=0$, $\PP$-a.s. One has that
\begin{align}\label{eq:Lseries}
    \mathbb{E}[e^{i\langle L(t),u\rangle}]= \exp (-t\psi(u)), \qquad u\in \cH,\:, t\geq 0 \:,
\end{align}
where the exponent $\psi$ can be expressed by the following infinite
dimensional L\'evy-Khintchine formula:
\begin{align}\label{eq:Kintchineformula}
   \psi(u)=\frac{1}{2}\langle Qu,u\rangle - i \langle a,u \rangle - \int_{\cH} \left( e^{i\langle u,y\rangle}-1 - \frac{i\langle u,y\rangle}{1+|y|^2}\right) \nu(\dd y), \qquad u\in \cH.
\end{align}
Here $Q$ is  a symmetric non-negative trace class operator on $\cH$,
$a\in \cH$ and $\nu$ is the L\'evy measure or  jump intensity
measure associated to $(L(t))_{t\geq 0}$, i.e. $\nu$ is a
$\sigma$-finite Borel measure on $\cH$ such that $\nu(\left\{
0\right\})=0$ and
\begin{align*}
    \int_{\cH} (|y|^2 \wedge 1) \nu(\dd y) <\infty.
\end{align*}
We shall say that $L(t)$ is generated by the triplet $(Q, \nu, a)$.
Let us remark that the third term on the right hand side of
(\ref{eq:Kintchineformula}) can also be written with the term $-
\frac{i\langle u,y\rangle}{1+|y|^2}$ replaced by $- i \langle u,y
\rangle\,\chi_{D}(y)$ with $D$ the unit ball in $\cH$ and $a$
replaced by $y$.

 In this paper we will consider a cylindrical L\'evy process
$L=(L(t))_{t\geq 0}$ defined by the orthogonal expansion
\begin{align}\label{eq:L(t)-operator}
    L(t)=\sum_{n=1}^\infty \beta_n L^n(t) e_n,
\end{align}
where $e_n$ is an orthonormal basis in $\cH$, $L^n=(L^n(t))_{t\geq
0}$, $L^n(0)=0$, are independent real valued, symmetric, identically
distributed    L\'evy processes without a Gaussian part, defined on
a fixed stochastic basis. Hence we are assuming, in particular,
that $L(t)$  has $Q\equiv 0$ and $a\equiv 0$. Moreover, $\beta_n$ is
a given (possibly unbounded) sequence of positive real numbers.

We also assume that $(e_n)_{n\in \N}$ is made of the eigenvectors of
the leading operator $A$ in \eqref{eq:OU}, assumed to have purely
discrete spectrum. More precisely we make the following assumptions:
\begin{hypothesis}\label{hp:A}
 $A$ is a self-adjoint, strictly negative operator with domain $D(A)$, which generates a $C_0$-semigroup $e^{tA}$, $t \geq0$, on $\cH$
 and such that there is a fixed basis $(e_n)_{n\in \N}$ in $\cH$ verifying:
     $(e_n)_{n\in \N} \subset D(A)$, $Ae_n=-\lambda_n e_n$, with $\lambda_n>0$,
     for any $n \in \mathbb{N}^+$ and $\lambda_n \uparrow +\infty$.
\end{hypothesis}

\begin{rem}\label{eq:remark}
   From the above assumptions it follows that:
    \begin{enumerate}
       \item the action of $e^{tA}$ on any element $u\in \cH$ can be written as
       \begin{align*}
           e^{tA} u= \sum_{k=1}^{+\infty} e^{-\lambda_k t}\langle u,e_k\rangle, \qquad k\in \N, \ t\geq 0.
       \end{align*}
       \item Since the law of $L^n, n\in \N$ is assumed to be symmetric, independent of $n$ we have, for any $n \in \mathbb{N}^+, t\geq 0$,
       \begin{align*}
            \mathbb{E}[e^{ihL^n(t)}] = e^{-t \psi_\R(h)}, \quad h\in \R,
       \end{align*}
       with $\psi_\R$ being given by
       \begin{align}\label{eq:nu}
          \psi_\R(h)= \int_{\R} (1-\cos(hy) \nu_\R(\dd y), \quad h\in \R,
       \end{align}
       and the L\'evy measure $\nu_\R$ associated with $L^n$ is symmetric for any $n \in \mathbb{N}$ (i.e. $\nu_\R(A) =\nu_\R(-A)$, for any Borel subset $A$ of $\R$).
    \end{enumerate}
\end{rem}


The definition of cylindrical Levy process in
\eqref{eq:L(t)-operator} is only formal, since it has of course to
be supplied with suitable assumptions on the $\beta_i$. We need
namely to give conditions under which the series on the right hand
side of \eqref{eq:L(t)-operator} converges in $\cH$.
Anyway, we can prove that $(L(t))_{t\geq 0}$ is always a
well-defined Levy process with values into a suitable Hilbert space
$\cU$. To this end, we recall that any infinite dimensional
separable Hilbert space $\cH$ can be identified with the space
$\ell^2$, using the basis $(e_n)_{n\in \N}$
In general, for a given sequence $\rho=(\rho_n)_{n\in \N}$ of real
numbers, we set
\begin{align*}
    \ell^2_\rho:= \left\{ (x_n) \in \R^\N: \ \sum_{n\geq 1} x^2_n \rho^2_n <\infty\right\}.
\end{align*}
The space $\ell^2_\rho$ becomes a Hilbert space with the inner
product $\langle x,y\rangle:= \sum_{n\geq 1} x_ny_n\rho^2_n$ for
$x=(x_n),y=(y_n) \in \ell^2_\rho$. For $\rho_n=1$, $\forall n \in
\mathbb{N}$, we have $\ell^2_\rho = \ell^2$. We quote from
\cite[Proposition 2.4]{PrZa} the result which provides conditions on
$\beta_n, \nu_\mathbb{R}$ such that the cylindrical L\'evy process
of the form given in \eqref{eq:L(t)-operator} is well-defined in
some Hilbert space $\cU$.
\begin{prop}\label{prop:cyl-Levy}
    The following conditions are equivalent:
    \begin{align*}
        &(i) \qquad \sum_{n=1}^\infty (\beta_n L^n(t_0))^2 <+\infty \qquad for \ some \ t_0 >0 \:, a.s. \:;\\
        & (ii) \qquad \sum_{n=1}^\infty (\beta_n L^n(t))^2 <+\infty \qquad for \ any \ t >0 \:, a.s. \:; \\
        &(iii) \qquad \sum_{n=1}^\infty \left( \beta^2_n \int_{|y|<1/\beta_n} y^2 \nu_\mathbb{R}(\dd y) + \int_{|y|\geq 1/\beta_n} \nu_\mathbb{R}(\dd y)\right)<
                +\infty.
    \end{align*}
    \end{prop}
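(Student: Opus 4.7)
The plan is to establish the chain of implications $(ii) \Rightarrow (i) \Rightarrow (iii) \Rightarrow (ii)$. The implication $(ii) \Rightarrow (i)$ is immediate (pick any $t_0$), so the real work is concentrated in the remaining two.

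For $(i) \Rightarrow (iii)$, I would set $X_n := \beta_n L^n(t_0)$ and observe that the $X_n$ are independent and symmetric. A classical Kolmogorov-type criterion for independent symmetric random variables says that $\sum_n X_n^2 < \infty$ a.s.\ is equivalent to
\[
\sum_{n=1}^\infty \left[ \PP(|X_n| > 1) + \E[X_n^2 \wedge 1] \right] < \infty.
\]
The next step is to relate these two quantities to the L\'evy measure. Since $\beta_n L^n$ is a L\'evy process whose L\'evy measure is the push-forward $\nu_\R^{(n)}(B) = \nu_\R(B/\beta_n)$, standard small-time two-sided estimates for one-dimensional symmetric L\'evy processes (of the Pruitt--Khintchine type) give, for the fixed $t_0 > 0$,
\[
\PP(|X_n| > 1) \asymp t_0 \, \nu_\R^{(n)}(\{|z|>1\}) = t_0 \, \nu_\R(\{|y| > 1/\beta_n\}),
\]
\[
\E[X_n^2 \wedge 1] \asymp t_0 \int_\R (z^2 \wedge 1)\, \nu_\R^{(n)}(\dd z) = t_0 \Bigl( \beta_n^2 \int_{|y|<1/\beta_n} y^2 \nu_\R(\dd y) + \nu_\R(\{|y| \geq 1/\beta_n\}) \Bigr).
\]
Summing over $n$ and inserting these into the Kolmogorov criterion yields $(iii)$.

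For $(iii) \Rightarrow (ii)$, I would use the L\'evy--It\^o decomposition of each scaled process, writing $\beta_n L^n(t) = M_n(t) + J_n(t)$, where $M_n$ is the compensated sum of jumps of $\beta_n L^n$ of size strictly less than $1$ (a square-integrable martingale with mean zero by symmetry) and $J_n$ is the compound Poisson of jumps of size at least $1$. The second term in $(iii)$ gives $\sum_n \PP(J_n(t) \neq 0) \leq t \sum_n \nu_\R(\{|y| \geq 1/\beta_n\}) < \infty$, so by Borel--Cantelli $J_n(t) = 0$ for all but finitely many $n$, almost surely, and hence $\sum_n J_n(t)^2 < \infty$ a.s. The first term gives $\sum_n \E[M_n(t)^2] = t \sum_n \beta_n^2 \int_{|y|<1/\beta_n} y^2 \nu_\R(\dd y) < \infty$; since the $M_n(t)$ are independent with mean zero, this forces $\sum_n M_n(t)^2 < \infty$ a.s.\ by Kolmogorov's two-series theorem. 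Combining the two contributions yields $(ii)$ at every $t > 0$.

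The principal obstacle is the $(i) \Rightarrow (iii)$ step: it requires sharp two-sided small-time estimates on $\PP(|X_n|>1)$ and on $\E[X_n^2 \wedge 1]$ in terms of the push-forward L\'evy measure $\nu_\R^{(n)}$, valid with constants uniform in $n$. Care is needed because the sequence $\beta_n$ may be unbounded, so the truncation level $1/\beta_n$ varies with $n$; here the symmetry of $\nu_\R$ is essential, since it ensures that no drift correction intervenes in the Kolmogorov criterion and that the small-jump martingale $M_n$ has mean zero without any centering.
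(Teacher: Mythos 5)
First, a point of reference: the paper does not prove this proposition at all --- it is quoted verbatim from Priola--Zabczyk \cite[Proposition 2.4]{PrZa} --- so there is no in-paper argument to compare yours against, and any complete proof you give necessarily goes beyond the paper. Your architecture is sound, and the direction $(iii)\Rightarrow(ii)$ is essentially complete: the L\'evy--It\^o splitting $\beta_n L^n(t)=M_n(t)+J_n(t)$, Borel--Cantelli for the large-jump parts, and the bound $\sum_n\E[M_n(t)^2]<\infty$ (which already gives $\sum_n M_n(t)^2<\infty$ a.s.\ by monotone convergence --- the two-series theorem is not needed) do the job. The reduction of $(i)\Rightarrow(iii)$ to the criterion ``for independent nonnegative $Y_n$, $\sum_n Y_n<\infty$ a.s.\ iff $\sum_n \E[Y_n\wedge 1]<\infty$'' is also correct.

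The gap sits exactly where you flag it, but it is worse than a missing citation: the two-sided estimates you invoke are false as stated. The bound $\PP(|X_n|>1)\asymp t_0\,\nu_\R(\{|y|>1/\beta_n\})$ fails from above (a pure small-jump process with high activity exceeds $1$ with probability near $1$ while the right-hand side is $0$); this is harmless only because that term is dominated by $\E[X_n^2\wedge1]$ and can be discarded. More seriously, the lower bound $\E[X_n^2\wedge1]\ge c\,t_0\int(z^2\wedge1)\,\nu_\R^{(n)}(\dd z)$ with $c$ uniform in $n$ cannot hold, since the left side is at most $1$ while the right side is unbounded in $n$ in general. What is true, and what you actually need, is: (a) the upper bound with constant $2$, always; (b) the lower bound with a universal constant under the proviso $\sigma_n^2:=t_0\int_{|z|<1}z^2\nu^{(n)}_\R(\dd z)\le 1$ and $\lambda_n:=t_0\nu_\R^{(n)}(\{|z|\ge1\})\le1$, which one gets from $\E[M_n^2\mathbf{1}_{\{M_n^2>8\}}]\le\E[M_n^4]/8\le(\sigma_n^2+3\sigma_n^4)/8\le\sigma_n^2/2$ together with the symmetrization inequality $\PP(|X_n|\ge1)\ge\frac12\PP(N_n\ge1)$, $N_n$ the number of large jumps; and (c) a universal positive lower bound on $\E[X_n^2\wedge1]$ whenever $\sigma_n^2>1$ or $\lambda_n>1$, which is what forces the proviso in (b) to hold for all but finitely many $n$ once $\sum_n\E[X_n^2\wedge1]<\infty$. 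Without (c) your argument does not exclude, say, $\sigma_n^2\to\infty$ along a subsequence on which $\E[X_n^2\wedge1]\to0$. With (a)--(c) supplied the proof closes; alternatively one can argue as Priola--Zabczyk implicitly do, via the classical fact that a symmetric infinitely divisible law on $\ell^2$ with candidate L\'evy measure $\rho$ exists iff $\int(\|x\|^2\wedge1)\rho(\dd x)<\infty$, which applied to $\rho=\sum_n t_0\nu_\R^{(n)}\circ e_n^{-1}$ is precisely condition $(iii)$.
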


\begin{rem}\label{rem:lrho}
    According to Proposition \ref{prop:cyl-Levy}, our cylindrical L\'evy process $L$ is a L\'evy process taking values in the Hilbert space $\cU:=\ell^2_\rho$, with a properly chosen weight
$\rho$. More precisely, we can choose any sequence $\rho=(\rho_n)$
such that
\begin{align*}
    \qquad \sum_{n=1}^\infty \left( \rho^2_n\beta^2_n \int_{\rho_n\beta_n|y|<1} y^2 \nu_\mathbb{R}(\dd y) + \int_{\rho_n\beta_n|y|\geq 1} \nu_\mathbb{R}(\dd y)\right)<
                +\infty.
\end{align*}
\end{rem}
Now let us come back to the Ornstein-Uhlenbeck process described by
equation \eqref{eq:OU}. According to Hypothesis \ref{hp:A}, we may
consider equation \eqref{eq:OU} as an  infinite sequence of
independent one dimensional equations, i.e.
\begin{align}\label{eq:finitedimensioneq}
 \begin{cases}
    \dd X^n(t)= -\lambda_n X^n(t) \dd t + \beta_n L^n(t),\\
    X^n(0)=x_n, \ n\in \N,
    \end{cases}
\end{align}
with $x=\sum_{n} x_n e_n \in \cH$, and $\left( x_n \right)_{n \in
\mathbb{N}} \in l^2\left( \mathbb{R} \right)$. The solution of
\eqref{eq:finitedimensioneq} is the stochastic process
$X=\sum_{n\geq 1} X^n e_n$, with components
\begin{align*}
   X^n(t)=e^{-\lambda_n t}x_n + \int_0^t e^{-\lambda_n (t-s)} \beta_n \dd L^n(s), \qquad n \in \N, \ t\geq 0.
\end{align*}
The processes $X^n$, for $n\in \mathbb{N}$, can be assumed to be
almost surely right-continuous with left limits. Using point (2) in
Remark \eqref{eq:remark} and following \cite[pag. 105]{Sat91}, we
can compute their characteristic functions. In particular we have
\begin{align*}
   \mathbb{E}[e^{ihX^n(t)}] = \exp \left( ie^{-\lambda_n x_n h}+ \int_0^t \psi_\R (e^{-\lambda_n(t-s)}\beta_nh)\dd s\right), \qquad h\in \R, t\geq 0, n \in \mathbb{N},
\end{align*}
where $\psi_\R$ is the function defined in \eqref{eq:nu}. In another
way, we write
\begin{align}
   &X(t) = e^{tA}x + L_A(t), \quad \label{eq:OUmild}
   \intertext{where}
   &L_A(t)= \int_0^t e^{(t-s)A} \dd L(s) = \sum_{n=1}^\infty \left( \int_0^t
   e^{-\lambda_n(t-s)} \beta_n \dd L^n(s)\right) e_n \notag,
\end{align}
and the process $X$ is $\cF_t$-adapted and Markovian. Here the
convergence of the series in the last member of the equality above
is to be meant in probability. More details on the solution of
equation \eqref{eq:OU} are given in \cite[Theorem 2.8]{PrZa}. In the
following we provide an application of this result to the
construction of an invariant probability measure for
Ornstein-Uhlenbeck processes driven by a quite general class of
symmetric cylindrical L\'evy noises, which we shall call, for
simplicity, O-U-L\'evy processes.


\subsection{Invariant measure for the infinite dimensional O-U-L\'evy driven processes}\label{ssec:invOU}
As usual we say that a probability measure $\mu$ on a complete
separable metric (i.e.\ polish) space $\cH$ is invariant with
respect to a Markov semigroup $(P_t)_{t\geq 0}$, with transition
probability kernel $P_t(x,\mathrm{d}y)$, $x,y\in \cH$ on $\cH$ if
for any Borel subset $\Gamma\subset E$ and any $t\geq0$ we have
$\mu(\Gamma)=\int\mu(\mathrm{d}x)P_t(x,\Gamma)$. We say shortly
$\mu$ is an invariant measure for $(P_t)_{t\geq 0}$. See, e.g.,
\cite[chapter 16]{PeZa} for equivalent formulations of this
property.
\begin{prop}
   Assume Hypothesis \ref{hp:A}. Moreover, assume that $\beta_n$, $ n \in \N$, is a bounded sequence and that the symmetric L\'evy measure $\nu_\R$ appearing in \eqref{eq:nu} satisfies
   \begin{align*}
        \int_1^{+\infty} \log(y)\nu_\R(\dd y) <\infty.
   \end{align*}
   Finally, assume that
   \begin{align*}
       \sum_{n=1}^\infty \frac{1}{\lambda_n} <\infty.
   \end{align*}
   Then the L\'evy driven Ornstein-Uhlenbeck process $X=(X(t))_{t\geq 0}$ given by \eqref{eq:OUmild}
   admits a unique invariant measure, in the sense that the process $X(t)$ is invariant under the Markov transition semigroup associated with $X(t)$.
\end{prop}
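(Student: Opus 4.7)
The plan is to exploit the diagonalisation of $A$ provided by Hypothesis~\ref{hp:A} and to reduce the infinite dimensional problem to a countable family of independent one-dimensional OU-L\'evy equations. For each $n\in\N$, equation \eqref{eq:finitedimensioneq} is a scalar OU equation driven by the symmetric L\'evy noise $\beta_n L^n$; the log-moment assumption $\int_1^{+\infty}\log(y)\,\nu_\R(\dd y)<\infty$ is precisely the classical Sato--Yamazato criterion (cf.\ \cite{Sat91,SY84}) ensuring that such an equation admits a unique selfdecomposable invariant probability measure $\mu_n$ on $\R$, namely the law of the improper stochastic integral $Z_n:=\int_0^{+\infty}e^{-\lambda_n s}\beta_n\,\dd L^n(s)$; its characteristic function is obtained by letting $t\to+\infty$ in the explicit formula for $\E[e^{ihX^n(t)}]$ computed in the previous subsection.

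Next I would take mutually independent random variables $Z_n$ with law $\mu_n$ on a common probability space and set $Z:=\sum_n Z_n e_n$, verifying that the series converges in $\cH$ (identified with $\ell^2$ via the basis $(e_n)$). Decomposing $\psi_\R$ into its small-jump and large-jump parts, integrating in $s$, and using the boundedness of $(\beta_n)$ together with the log-moment hypothesis on $\nu_\R$, one obtains a bound of the form $\E[|Z_n|^2\wedge 1]\le C\lambda_n^{-1}$; the hypothesis $\sum_n\lambda_n^{-1}<\infty$ and Kolmogorov's three-series theorem then give $\sum_n Z_n^2<+\infty$ almost surely, so the law $\mu$ of $Z$ is a well-defined Borel probability measure on $\cH$, which is the candidate invariant measure.

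Invariance and uniqueness then follow from a characteristic functional calculation. From \eqref{eq:OUmild} and the independence of the one-dimensional components I would obtain, for every $u\in\cH$,
\begin{equation*}
\int_\cH \E\left[e^{i\langle u,X(t)\rangle}\mid X(0)=x\right]\mu(\dd x)=\hat\mu(e^{tA}u)\exp\left(-\int_0^t\sum_{n\ge 1}\psi_\R(\beta_n e^{-\lambda_n(t-s)}\langle u,e_n\rangle)\,\dd s\right),
\end{equation*}
and the change of variable $r=t-s$ together with the product expression $\hat\mu(u)=\prod_n\E[\exp(i\langle u,e_n\rangle Z_n)]$ makes the right-hand side collapse to $\hat\mu(u)$, which is the invariance relation. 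Uniqueness is inherited from the scalar case, since any invariant measure on $\cH$ must project onto the unique invariant measure $\mu_n$ of each component $X^n$ and is therefore determined by its finite dimensional marginals.

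The main technical obstacle will be the $\cH$-support step: the log-moment condition, the boundedness of $(\beta_n)$ and the summability of $\lambda_n^{-1}$ must be combined in a quantitative way so as to exclude that $\mu$ lives only on a larger weighted space of the type $\cU=\ell^2_\rho$ of Remark~\ref{rem:lrho}. Once the estimate $\E[|Z_n|^2\wedge 1]\le C\lambda_n^{-1}$ is secured, the remaining manipulations of characteristic functionals, and the descent of uniqueness from the scalar components to $\cH$, are routine if notationally heavy.
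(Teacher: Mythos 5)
Your construction of the candidate measure is the same as the paper's: both identify the invariant law $\mu_n$ of each scalar component \eqref{eq:finitedimensioneq} with the law of $\int_0^\infty e^{-\lambda_n s}\beta_n\,\dd L^n(s)$ (the log-moment condition being exactly the Sato criterion for existence of this improper integral), form the product measure, and check that $\sum_n Z_n^2<\infty$ a.s.\ so that $\mu(\cH)=1$. For that support step the paper simply reduces to the criterion $\sum_n\int_\R(1\wedge y^2)\nu_n(\dd y)<\infty$ and cites \cite[Proposition 2.11]{PrZa}, whereas you propose to prove the quantitative bound $\E[|Z_n|^2\wedge 1]\le C\lambda_n^{-1}$ directly; this is correct (a change of variable $u=e^{-\lambda_n s}\beta_n$ in the expression for $\nu_n$ produces the factor $\lambda_n^{-1}$, with the constant uniform in $n$ thanks to the boundedness of $(\beta_n)$ and the log-moment condition) and makes the argument more self-contained. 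Where you genuinely diverge is the last step: the paper proves that $X(t)\to\xi$ in probability as $t\to+\infty$ for \emph{every} initial datum $x$ (by a time-reversal identity in law and the estimate $|X(t)-\xi|^2\overset{d}{=}\sum_n e^{-2\lambda_n t}\xi_n^2\le e^{-2\lambda_1 t}|\xi|^2$), from which both invariance and uniqueness follow at once; you instead verify invariance by the characteristic-functional identity $\hat\mu(e^{tA}u)\exp\bigl(-\int_0^t\sum_n\psi_\R(\beta_n e^{-\lambda_n(t-s)}\langle u,e_n\rangle)\,\dd s\bigr)=\hat\mu(u)$, which is a clean and correct computation, and then argue uniqueness by projection. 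The convergence route buys you exponential mixing essentially for free; your route avoids any asymptotic analysis for the invariance claim.

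One point in your uniqueness argument needs tightening: knowing that every invariant measure has one-dimensional marginals $\mu_n$ does \emph{not} by itself determine its finite-dimensional marginals, since it says nothing about the joint law of $(Z_{n_1},\dots,Z_{n_k})$ under that measure. You need the slightly stronger statement that the projection of any invariant measure onto $\mathrm{span}(e_{n_1},\dots,e_{n_k})$ is invariant for the $k$-dimensional OU system with independent components, whose invariant measure is unique (by the same Sato--Yamazato theory, or by the convergence-in-law argument applied in $\R^k$) and equals the product $\mu_{n_1}\otimes\cdots\otimes\mu_{n_k}$. With that amendment the finite-dimensional marginals, and hence the measure on the Borel $\sigma$-algebra of $\cH$, are pinned down and your proof closes.
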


\begin{proof}
    The proof proceeds basically on the same line of \cite[Proposition 2.11]{PrZa}.
   To show that there exists    an invariant measure we first notice that (according to \cite[Theorem 17.5]{Sato}) that each one dimensional Ornstein-Uhlenbeck process $X^n(t)$ has an invariant measure $\mu_n$ which is the law of the random variable
   \begin{align*}
       \int_0^\infty e^{-\lambda_n u } \beta_n \dd L^n(u),
   \end{align*}
   having characteristic function $$
   \hat{\mu}_n(h)= \exp\left( -\int_0^\infty \psi_\R(e^{-\lambda_n u} \beta_n h)\dd u\right),
   \qquad h\in \R. $$
   Let us consider the product measure $\mu = \Pi_{n\geq 1} \mu_n$ on $R^\N$. This is the law of the family $(\xi_n)_{n\in \N}$ of independent  random variables, where
   \begin{align*}
      \xi_n = \int_0^\infty e^{-\lambda_n u} \beta_n \dd L^n(u), \quad n\geq 1.
   \end{align*}
    We underline that $\xi_n$ is an infinite divisible real-valued random variable.
   Now define $\xi=\sum_{n=1}^\infty \xi_n e_n$. Then the random variable $\xi$ takes values in $\cH$ if and only if
   the L\'evy measures $\nu_n$ of $\xi_n$ verify
   \begin{align}\label{eq:integrabilitycondition}
       \sum_{n=1}^\infty \int_\R (1 \wedge y^2) \nu_n(\dd y) <\infty.
   \end{align}
    The latter comes from the definition of $\xi_n$ and the condition (iii) in Proposition \ref{prop:cyl-Levy}.
   Exploiting the result in \cite[Proposition 2.11]{PrZa}, we deduce that   condition \eqref{eq:integrabilitycondition} is satisfied, hence we have $\mu(\cH)=1$.
   It is possible to prove that $\mu$ is the unique invariant measure of $X$ given by \eqref{eq:OUmild}, by showing that, for any $x\in \cH$,
   \begin{equation}\label{eq:claiminvariant}
       \lim_{t\to +\infty} X(t)= \xi
   \end{equation}
   in probability.
   To prove this we first assume $x=0$ in \eqref{eq:OUmild}. In this case
   \begin{align*}
       X^n(t)= \int_0^t e^{-\lambda_n(t-s)} \beta_n \dd L^n(t).
   \end{align*}
   Now we recall  the following identity:
   \begin{align*}
       \mathbb{E}\left[e^{ih\int_s^t g(u)\dd L^n(u)} \right]=
       \exp \left( \int_s^t \psi_\R(g(u)h)\dd u\right) \:,\: h \in \mathbb{R} \:,\: 0\leq s \leq t \:,
   \end{align*}
   which holds for any real continuous function $g$ on $[s,t]$, see \cite[pag. 105]{Sato}.
   Then if we compute the characteristic function of $\xi_n$ we see that
   $X^n(t)$ and $\xi_n(t)$ have the same law.
   This fact allows us to estimate the following quantity:
   \begin{align*}
       a_\epsilon(t):= \mathbb{P}(|X(t)-\xi|^2 >\epsilon) \:,\: \text{ for any }\, \epsilon >0 \; , \; t >0.
   \end{align*}
   In fact, since $X^n$ and $\xi_n$ have the same law we can write
   \begin{align*}
       |X(t)-\xi|^2 &= \left| \int_0^t e^{(t-s)A}\dd L(s)- \sum_{n=1}^\infty\xi_n(t) e_n \right|^2 \\
       & = \left| \sum_{n=1}^\infty \int_0^t e^{-\lambda_n(t-s)}\beta_n\dd L^n(s)- \sum_{n=1}^\infty \int_0^\infty e^{-\lambda_n s}\beta_n\dd L^n(s) \right|^2 \\
       &= \left| \sum_{n=1}^\infty\int_0^t e^{-\lambda_n s}\beta_n\dd L^n(s)- \sum_{n=1}^\infty \int_0^\infty e^{-\lambda_n s}\beta_n\dd L^n(s) \right|^2\\
       &=\left| \sum_{n=1}^\infty\beta_n^2\int_t^\infty e^{-\lambda_n s}\beta_n\dd L^n(s) \right|^2.
   \end{align*}
   Moreover, (see \cite[pag. 11]{PrZa}) it is possible to prove that the law of the random variable in the last term in the previous expression coincides with the one of
   \begin{align*}
      \sum_{n=1}^\infty e^{-2\lambda_n t}\xi_n^2.
   \end{align*}
   We then have, for any $t>0$,
   \begin{align*}
        a_\epsilon(t):= \mathbb{P}\left(  \sum_{n=1}^\infty e^{-2\lambda_n t}\xi_n^2 >\epsilon\right) \leq \mathbb{P} \left( e^{-\lambda_1 t} \sum_{n=1}^\infty \xi_n^2 >\epsilon\right) = \mathbb{P}(|\xi|^2 > e^{-2\lambda_1 t} \epsilon).
   \end{align*}
   Letting $t\to +\infty$ we find $\lim_{t\to +\infty } a(t)=0$, for any $\epsilon > 0$. This proves the claim \eqref{eq:claiminvariant} for the case $x=0$. The general case, $x \neq 0$, can easily be obtained by translation.
\end{proof}

\begin{rem}\label{rem:LAinfty}
   For further use (see Section \ref{sec:section2}), we emphasize that from the proof of the claim \eqref{eq:claiminvariant}  we can also see that,
   for any $t>0$,
   the random variables
   \begin{align*}
       \int_0^t e^{(t-s)A} \dd L(s) \qquad and \qquad \xi_t:=\sum_{n=1}^\infty \int_0^t e^{-\lambda_n s} \beta_n \dd L^n(s)
   \end{align*}
   have the same law.
   Hence we obtain that the random variable
   \begin{align*}
      L_A(+\infty):= \lim_{t\to +\infty } \int_0^t e^{(t-s)A} \dd L(s)
   \end{align*}
   is well-defined and its law coincides with the one of the random variable
   $\xi$.
\end{rem}

\section{The stochastic semilinear differential equation}
\label{sec:section1}
\subsection{The state equation: existence and uniqueness of solutions}\label{ssec:section1}
In this section we concentrate on a semilinear SDE driven by L\'evy
noise, thus extending the setting of Section \ref{sec:OU} to include
a non linear drift. Following basically the setting of
\cite{ALEBOU} and \cite{PeZa}, we consider a stochastic differential
equation of the form
\begin{equation}
    \label{e1}
\left\{ \begin{array}{ll}
            \mathrm{d}X(t) = AX(t)\,\mathrm{d}t + F(X(t))\,\mathrm{d}t + B\,\mathrm{d}L( t), \quad t \geq 0 &\\
            X(0) = x \in D(F)
\end{array}
\right.
\end{equation}
where the stochastic process $X=(X(t))_{t\geq0}$ takes values in a
real separable Hilbert space $\mathcal{H}$, $A$ is a linear operator
from a dense domain
 $D(A)$ in $\mathcal{H}$ into $\mathcal{H}$
which generates a $C_0$-semigroup of strict negative type.

$B$ is a linear bounded operator from a suitable Hilbert space $\cU$
(which is to be precised, see Remark \ref{rem:cyl-levy} below) into
$\mathcal{H}$. $F$ is a mapping from $D(F)\subset\mathcal{H}$ into
$\mathcal{H}$, continuous, nonlinear, Fr\'{e}chet differentiable and
such that
\begin{equation}
    \langle F(u)-F(v)-\eta(u-v),u-v \rangle < 0, \quad for \ some \ \eta >0
\end{equation}
and all $u,v \in D(F)$, where $\langle \, , \, \rangle$ is the scalar product in $\mathcal{H}$. \\
The connection between $A$ and $F$ consists in requiring that
$\omega>\eta$ (so that $A+F$ is maximal dissipative or
$m$-dissipative in the sense of \cite[pag. 73]{DPZVerde}, i.e. the
range of $\lambda - (A+F)$ is $\cH$, for some - and consequently all
- $\lambda >0$).

We assume that $L$ is a cylindrical L\'{e}vy process according to
the description given in Section \ref{sec:cylLevy}, i.e.
\begin{align*}
     L(t)= \sum_{n=1}^\infty \beta_n L^n(t)e_n,
\end{align*}
with $(\beta_n)_{n\in \N}$ a given (possibly unbounded) sequence of
positive real numbers, $(L^n)_{n\in \N}$ a sequence of real valued,
symmetric, i.i.d. L\'evy processes without Gaussian part and
$(e_n)_{n\in \N} $ an orthonormal basis of $\cH$. Moreover, we will
work under Hypothesis \ref{hp:A}, hence assuming that $(e_n)_{n\in
\N} $ is made of eigenvectors of $A$.

\begin{rem}\label{rem:cyl-levy}
   According to Remark \ref{rem:lrho}, identifying $\cH$ with the space $\ell^2$, the cylindrical L\'evy process $L(t)$ is a well-defined L\'evy process taking values in the Hilbert space $\cU:=\ell^2_{\rho}$ for a suitable sequence $\rho = \left( \rho_n \right)_{n \in \mathbb{N}}$. This means that
\begin{align*}
   \sum_{k=1}^\infty (\beta_k^2 L^n(t)^2\rho_n^2) <\infty.
\end{align*}
  We assume that $B$ is a linear bounded operator acting on $\cU$ with values in $H$.
Through the identification of $\cH$ with $\ell^2$ we can think that
$B$ can be written in the following form:
\begin{align}
    B L(t)= \sum_{k=1}^\infty b_k \beta_k L^{n}(t)e_k,
\end{align}
with $(b_k)_{k\in \N}$ such that $\sup_{n\in \N} \frac{b_k}{\rho_k}
<\infty$.
\end{rem}

$L$ can be assumed to have c\`{a}dl\`{a}g paths (as follows e.g. by
\cite{APD} and \cite[p.39]{PeZa}). We assume as in
\eqref{eq:L(t)-operator} that it is a pure jump process, in the
sense that no Gaussian and deterministic part is present. In what
follows $\cB$ is a reflexive Banach space continuously embedded into
$\cH$ as a dense Borel subset. We assume that $A_{|\cB}$, $F_{|\cB}$
are almost $m$-dissipative in $\cB$, in the sense that
$A_{|\cB}+\omega \mathbf{1}_{|\cB}$ is $m$-dissipative in $\cB$.
Moreover we assume that $D(F) \supset \cB$ and that $F$ maps bounded
subsets of $\cB$ into bounded subsets of $\cH$. We also assume that
the stochastic convolution $L_A(t)$ of $B\,\mathrm{d}L(t)$ with
$S(t)=e^{tA}$, i.e.\
\begin{equation}\label{eq:LAcadlag}
    L_A(t) = \int_{0}^{t}S(t-s)B\,{d}L(s),\, t \geq 0,
\end{equation}
has a c\`{a}dl\`{a}g version in $D(-A_{\cB})^{\alpha}$ for some
$\alpha \in [0,1)$. Finally, we impose the following condition: for
all $T>0$
\begin{equation} \label{eq:intF}
    \int_0^T |F(L_A(t))|_{\cB} \dd t <\infty, \quad \PP-a.s.
\end{equation}
(cf. \cite[p. 183]{PeZa} for more details).
It follows from our assumptions that $L_A$ is square-integrable,
$\mathcal{F}_t$-adapted (where $\mathcal{F}_t$ is the natural
 $\sigma$-algebra associated with $L$) (see \cite[p. \ 163]{PeZa}).

\begin{rem}\label{Remark1}
    We give two cases where the property \eqref{eq:LAcadlag} of $(L_A(t))_{t\geq 0}$ holds:
    \begin{enumerate}
        \item  $\mathcal{B}$ is a Hilbert space and $S(t)$ is a contraction semigroup in this space $\mathcal{B}$ and $B\,L(t)$ takes values in
         $\mathcal{B}$; see, e.g.\ \cite[p.158, Theorem
         9.20]{PeZa}.
        \item $S(t)$ is analytic in $\mathcal{B}$ and $BL$ has c\`{a}dl\`{a}g  trajectories
        in  $D\left((-A_{\mathcal{B}})^{\alpha}\right)$, for some $\alpha\in [0,1)$, where $A_{\mathcal{B}}$ is the restriction of
         $A$ to $\mathcal{B}$, see, e.g.\ (\cite[p.\
         163, Prop. 9.28]{PeZa}).
    \end{enumerate}
\end{rem}


\begin{example}
    Let us provide an example for the setting $(\mathcal{H},\cU,\mathcal{B},L,A,Q,F)$where both conditions \eqref{eq:LAcadlag} and \eqref{eq:intF} hold. 
    Let $\Lambda\subset\mathbb{R}^n$, bounded and open, $n\in\mathbb{N}$, and let $\mathcal{H}=\cU:=L^2(\Lambda)$. Let $F$ be of the form of a multinomial of odd
    degree $2n+1$, $n\in\mathbb{N}$, i.e.\ $F$ is a mapping of the form $F(u)=g_{2n+1}(u)$, where $g_{2n+1}: \R \to \R$, is a polynomial of degree $2n+1$ with first derivative bounded from above,
    see \cite{ALEBOU}.
It follows that  $D(F)=L^{2(2n+1)}(\Lambda)\subsetneqq L^2(\Lambda)$. 
    We take $\mathcal{B}:=L^{2p}(\Lambda)$ with $p\geq 2n+1$. Let $A=\Lambda$ be the Laplacian in $L^2(\Lambda)$ with Neumann boundary conditions  on the boundary
    $\partial\Lambda$. Let $Q= \mathbf{1}$ as an operator in $\cH$ and let $L$ be a  process on $\mathcal{H}= L^2(\Lambda)$ of the type described in Remark \ref{Remark1},  and such that the corresponding L\'evy measure $\nu$ satisfies
    \begin{equation}\label{eq:norma}
      \int_{L^2(\Lambda)} \left| x \right|_{W^{\beta,2p(2n+1)}}\nu(\mathrm{d}x)<+\infty,
      \end{equation}
       where $ W^{\beta,2p(2n+1)}$ is a fractional Sobolev space
    with given index $\beta>0$; moreover we require $\int_{ \left| y \right|\le1} \left| y \right|^2\nu(\mathrm{d}y)+\nu(y\vert \left| y \right|\geq1)<\infty$.
    From \eqref{eq:norma} (see \cite[Prop. 6.9]{PeZa}) it follows that
    $L(t)\in D\left((-A_{2p(2n+1)})^{\gamma}\right)\subset L^{2p(2n+1)}(\Lambda)$ for some $\gamma>0$ and has c\`{a}dl\`{a}g trajectories in $ D\left((-A_{2p(2n+1)}^{\gamma})\right)$. Here $A_{2p(2n+1)}$ denotes
    the generator of the heat semigroup with Neumann boundary conditions operating in $L^{2p(2n+1)}$. Then by \cite[Prop. 9.28, p.163 and Theorem 10.15, pag.187]{PeZa}, $L_A$ is well-defined and satisfies \eqref{eq:LAcadlag} and  \eqref{eq:intF}.
\end{example}

Now we are rea\dd y to state the main result of this section, which
concerns with existence and uniqueness of solutions for equation
\eqref{e1}. We refer to \cite[Theorem 4.9]{ALEBOU} for the proof.

\begin{theo}
    \label{t1}
    Assume $A,Q,F,L$ satisfy all previous assumptions and let $L_A$ satisfy conditions \eqref{eq:LAcadlag} and \eqref{eq:intF} above. Then there exists a unique c\`{a}dl\`{a}g mild solution of
    \eqref{e1} in the sense of being adapted, c\`{a}dl\`{a}g in $\mathcal{B}$, for any $x\in\mathcal{B}$ and satisfying almost surely
    \begin{equation}
        X(t) = S(t)\,x + \int\limits_0^tS(t-s)F(X(s))\,\mathrm{d}s + L_A(t), \, t \geq 0, \ x \in D(F),
    \end{equation}
    with $X(t)\in D(F)$ for all $t\geq0$.

    \noindent For each $x\in\mathcal{H}$ there exists a generalized solution of \eqref{e1}, i.e. there exists $(X_n)_{n\in\mathbb{N}}$,
    $X_n\in\mathcal{B}$, $X_n$ unique mild adapted solutions of \eqref{e1} with $X_n(0)=x$ such that $\left| X_n(t)-X(t) \right|_{\mathcal{H}}\to0$ on each
    bounded interval, as $n\to\infty$.

    \noindent Moreover $X(t)$ defines Feller families on $\mathcal{B}$ and on $\mathcal{H}$, in the sense that the Markov semigroup $P_t$ associated with $X(t)$ maps for any
     $t\geq0$, $C_b(\mathcal{H})$ into $C_b(\mathcal{H})$ and $C_b(\mathcal{B})$ into $C_b(\mathcal{B})$.
\end{theo}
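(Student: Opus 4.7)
The plan is to reduce equation \eqref{e1} to a pathwise problem by setting $Y(t):=X(t)-L_A(t)$, which transforms the mild formulation into the random evolution equation
\begin{equation*}
Y(t) \;=\; S(t)\,x \;+\; \int_0^t S(t-s)\,F\bigl(Y(s)+L_A(s)\bigr)\,\dd s, \qquad t\geq 0.
\end{equation*}
For each fixed $\omega\in\Omega$ this is a \emph{deterministic} Cauchy problem perturbed by the c\`adl\`ag trajectory $L_A(\cdot,\omega)\in D((-A_{\cB})^{\alpha})\hookrightarrow\cB$; adaptedness and measurability of $X=Y+L_A$ are automatic from those of $L_A$ as soon as $Y$ is constructed as a measurable function of the noise. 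This decoupling is the organising principle of the whole argument.

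The analytic obstacle is that $F$ is only densely defined with at most polynomial growth, so I cannot directly invoke a Banach fixed point. I would therefore introduce a Yosida-type regularisation $F_m$ of $F$ (built from the resolvent of $F-\eta I$, which is dissipative) that is globally Lipschitz on $\cB$ for each $m\in\N$, retains the dissipativity constant $\eta$, and satisfies $F_m(u)\to F(u)$ as $m\to\infty$ on $D(F)$. The approximating equation with $F$ replaced by $F_m$ is then solvable by contraction in the Skorohod-type space $D([0,T];\cB)$, producing a c\`adl\`ag adapted process $Y_m$ whose jumps are exactly those of $L_A$. Here the almost-$m$-dissipativity hypothesis on $A_{|\cB}+F_{|\cB}$ is what makes the resolvent construction legitimate in $\cB$.

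Next I would pass to the limit $m\to\infty$. Pairing the difference equation for $Y_m-Y_k$ with a duality map of $\cB$ and using the dissipativity estimate with constant $\eta<\omega$ yields a Gronwall-type bound
\begin{equation*}
\sup_{t\leq T}|Y_m(t)-Y_k(t)|_{\cB}\;\leq\;C_T\int_0^T\bigl(|(F_m-F)(Y_m(s)+L_A(s))|+|(F_k-F)(Y_k(s)+L_A(s))|\bigr)\,\dd s,
\end{equation*}
and the key integrability hypothesis \eqref{eq:intF} controlling $\int_0^T|F(L_A(s))|_{\cB}\,\dd s$ is what allows one to dominate the right-hand side and conclude that $\{Y_m\}$ is Cauchy in $D([0,T];\cB)$. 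The limit $Y$, together with $X:=Y+L_A$, is the sought mild solution; uniqueness follows at once from the same dissipative estimate applied to any two candidate solutions, and the c\`adl\`ag regularity of $X$ in $\cB$ is inherited from $L_A$ combined with strong continuity of $S(\cdot)$ applied to the convolution term.

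Finally, for $x\in\cH\setminus\cB$ I would approximate $x$ by $x_n\in\cB$ with $x_n\to x$ in $\cH$ and exploit the dissipative contraction
\begin{equation*}
|X(t;x_n)-X(t;x_m)|_{\cH}\;\leq\;e^{(\omega-\eta)t}\,|x_n-x_m|_{\cH},
\end{equation*}
which comes from pairing the mild equation with the duality map of $\cH$. This delivers $X(\cdot;x)$ as the uniform limit on bounded intervals in $\cH$, i.e.\ the generalized solution. The same contraction yields immediately the Feller property on both $\cB$ and $\cH$ via dominated convergence applied to $x\mapsto\E\,\phi(X(t;x))$ for $\phi\in C_b$. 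The step I expect to be genuinely delicate is obtaining the uniform-in-$m$ c\`adl\`ag estimates in the Banach space $\cB$ (rather than in $\cH$), since it lies outside the standard Hilbert-space toolkit and requires simultaneously the almost-$m$-dissipativity of $A_{|\cB}+F_{|\cB}$ and the fractional regularity of $L_A$ in $D((-A_{\cB})^{\alpha})$.
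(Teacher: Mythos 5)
Your proposal follows essentially the same route as the proof the paper relies on: Theorem \ref{t1} is not proved in the text but deferred to \cite[Theorem 4.9]{ALEBOU}, whose argument is exactly the Peszat--Zabczyk scheme you describe --- subtract the stochastic convolution $L_A$, solve the resulting pathwise deterministic dissipative equation by Lipschitz (Yosida-type) approximation of $F$ with Gronwall estimates controlled by \eqref{eq:intF}, and then get generalized solutions and the Feller property from the dissipative contraction in $\cH$. The only slip is the sign in your contraction estimate, which should read $e^{-(\omega-\eta)t}$ rather than $e^{(\omega-\eta)t}$; this is immaterial for existence, uniqueness and the Feller property on bounded intervals, though the decay is what is exploited later for the invariant measure.
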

\subsection{Existence and uniqueness of an invariant measure}
\label{sec:section2} In the following we deal with the asymptotic
behavior of the Markov semigroup corresponding to equation
\eqref{e1}. We will see that in our case, in addition to existence
and uniqueness of the invariant measure $\mu$ (whose notion has been
recalled in Section \ref{sec:OU}), we can prove that it is also
exponentially mixing. We quote from \cite[chapter 16]{PeZa} the
definition of exponentially mixing invariant measure.
Let ${Lip}(\cH)$ be the space of all real valued Lipschitz
continuous functions $\psi:\cH\to\mathbb{R}$ endowed with norm $\|
\psi \|_{\infty} + \|  \psi \|_{Lip}$, with $\| \psi \|_{\infty}$
the sup-norm and
\begin{equation}
    \| \psi \|_{Lip} := {\sup_{x\not=y}} \frac{\left| \psi(x)-\psi(y) \right|}{\left| x-y \right|_\cH};
\end{equation}
$\| \psi \|_{Lip}$ is said to be the smallest Lipschitz constant for
$\psi$, see \cite[p.16]{PeZa}. We say that an invariant measure
$\mu$ is exponentially mixing with exponent $\omega>0$ and bound
function $c:\cH\to(0,+\infty)$ with respect to a Markov semigroup
$(P_t)_{t\geq 0}$ if
\begin{equation}
    \left| P_t\psi(x) - \int_\cH\psi(y)\,\mathrm{d}\mu (y) \right| \le c(x)\mathrm{e}^{-\omega t}\| \psi \|_{Lip}, \quad  \forall x \in \cH,\,
     \forall t > 0, \, \psi\in{Lip}(E).
\end{equation}
If $\mu$ is exponentially mixing, then $P_t(x,\Gamma)\to\mu(\Gamma)$
as $t\uparrow +\infty$ for any Borel subset $\Gamma$ of $E$ (cfr.\
\cite{DPZVerde} and \cite{PeZa}, p.\ 288).

We have the following from \cite[Theorem 16.6 p.293]{PeZa}:

\begin{theo}\label{t2}
    Let us consider the SDE \eqref{e1} under the assumptions on $A$ and $F$ given at the beginning of the Subsection \ref{ssec:section1} and under the assumptions of Theorem \ref{t1}.
    Assume, in addition:
   $${\sup_{t\geq0}}\,\mathbb{E} \left( \left| L_A(t) \right|_{\mathcal{H}} + \left| F\left(L_A(t)\right) \right|_{\mathcal{H}} \right) <+\infty.$$
    Then there exists a unique invariant measure $\mu$ for the Markov semigroup $(P_t)_{t\geq 0}$ on $\mathcal{H}$ associated with the mild solution $X$ of \eqref{e1}
    ($(P_t)_{t\geq 0}$ gives the transition probabilities for $X$). \\
    $\mu$ is exponentially mixing with exponent $\omega+\eta$ and a bound function $c$ of linear growth in the sense that
    \begin{equation}
        \left| c(x) \right| \le C(\left| x \right|+1),
    \end{equation}
    for some constant $C>0$ and for all $x \in \mathcal{H}$.
\end{theo}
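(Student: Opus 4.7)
The plan is to adapt the proof of \cite[Thm.~16.6]{PeZa} to the present L\'evy setting, exploiting the $m$-dissipativity of $A+F$. First I would set $Y^{x}(t):=X^{x}(t)-L_A(t)$, so that $Y^{x}$ is a $\PP$-a.s.\ continuous $\cH$-valued process satisfying the random (pathwise) mild evolution equation
\begin{align*}
Y^{x}(t)=S(t)x+\int_{0}^{t}S(t-s)\,F(Y^{x}(s)+L_A(s))\,\dd s.
\end{align*}
Applying the energy identity to the difference $\Delta(t):=X^{x_1}(t)-X^{x_2}(t)=Y^{x_1}(t)-Y^{x_2}(t)$, the strict negativity of $A$ together with the dissipativity hypothesis $\langle F(u)-F(v)-\eta(u-v),u-v\rangle<0$ yields $|X^{x_1}(t)-X^{x_2}(t)|_{\cH}\le e^{-(\omega-\eta)t}|x_1-x_2|_{\cH}$, $\PP$-a.s. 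A parallel computation, comparing $Y^{x}$ with the zero path and bounding $|\langle F(L_A(t)),Y^{x}(t)\rangle|\le|F(L_A(t))|\,|Y^{x}(t)|$, gives $\frac{\dd}{\dd t}|Y^{x}(t)|\le-(\omega-\eta)|Y^{x}(t)|+|F(L_A(t))|$; Gronwall together with the standing bound $\sup_{t}\E(|L_A(t)|_\cH+|F(L_A(t))|_\cH)<\infty$ then delivers the a priori estimate $\sup_{t\ge 0}\E|X^{x}(t)|_{\cH}\le C(1+|x|_{\cH})$.

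To exhibit the invariant measure I would employ the ``pull-back from $-\infty$'' construction: extend $L$ to a two-sided L\'evy process $\hat L$ on $\R$ and, for $s<t$, let $\hat X(s,t,x)$ denote the mild solution at time $t$ with initial condition $x$ prescribed at time $s$ (driven by $\hat L$). Combining the cocycle identity $\hat X(s_1,t,x)=\hat X(s_2,t,\hat X(s_1,s_2,x))$ with the contraction estimate above gives, for $s_1<s_2<0$,
\begin{align*}
\E|\hat X(s_1,0,x)-\hat X(s_2,0,x)|_{\cH}\le e^{-(\omega-\eta)|s_2|}\,\E|\hat X(s_1,s_2,x)-x|_{\cH}\le e^{-(\omega-\eta)|s_2|}(2|x|_{\cH}+C),
\end{align*}
so $\{\hat X(s,0,x)\}$ is Cauchy in $L^1(\Omega;\cH)$ as $s\to-\infty$. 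Its limit $\bar X(0)$ is independent of $x$ (another application of contraction), and its law $\mu$, by stationarity of the increments of $\hat L$, is time-translation invariant, hence invariant for $(P_t)_{t\ge 0}$.

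The exponential mixing estimate reduces to the same contraction bound: for $\psi\in Lip(\cH)$,
\begin{align*}
\Bigl|P_t\psi(x)-\int_{\cH}\psi\,\dd\mu\Bigr|\le\|\psi\|_{Lip}\,\E|\hat X(0,t,x)-\bar X(t)|_{\cH}\le\|\psi\|_{Lip}\,e^{-(\omega-\eta)t}\,\E|x-\bar X(0)|_{\cH},
\end{align*}
so the bound function $c(x):=\E|x-\bar X(0)|_{\cH}\le|x|_{\cH}+\E|\bar X(0)|_{\cH}$ is of linear growth in $x$. Uniqueness then follows automatically by integrating this mixing inequality against any candidate invariant $\nu$ and letting $t\to\infty$. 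The most delicate step, and where I expect the main obstacle, is the rigorous justification of the energy identity: since $F$ is only densely defined with polynomial growth, the formal computation must first be performed on the Yosida regularizations $F_m$ (and the $n$-dimensional spectral truncations exploited in Sec.~4), the contraction and moment estimates obtained uniformly in $m,n$, and the limit taken using the $\cH$- and $\cB$-Feller properties recorded in Theorem~\ref{t1} to transfer the weak convergence to the full nonlinear flow.
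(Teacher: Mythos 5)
The paper gives no proof of this theorem — it is quoted directly from \cite[Theorem 16.6, p.~293]{PeZa} — and your argument (subtracting the stochastic convolution, the pathwise dissipativity contraction $|X^{x_1}(t)-X^{x_2}(t)|_{\cH}\le e^{-(\omega-\eta)t}|x_1-x_2|_{\cH}$, the pull-back from $-\infty$ along a two-sided L\'evy process, and the Yosida/finite-dimensional regularization needed to legitimize the energy identity for the merely densely defined $F$) is essentially a faithful reconstruction of the proof of that cited result, so it is correct and takes the same route. The one discrepancy is the mixing exponent: your computation yields $\omega-\eta$, which is the rate consistent with the standing requirement $\omega>\eta$ and with the decay $e^{-2(\omega-\eta)t}$ used repeatedly in Section~4, so the ``$\omega+\eta$'' in the statement should be read as a misprint for $\omega-\eta$.
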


\section{Decomposition of the solution process in a stationary and an asymptotically small component}

 Let us consider the cylindrical L\'evy process $L=(L(t))_{t\geq 0}$, as in Sections \ref{sec:OU} and \ref{sec:section1}. Following
\cite[p. 295]{PeZa}, let $\overline{L}(t)$, $t\in\mathbb{R}$ be the
corresponding double-sided process such that $\overline{L}(t)=L(t)$,
$t\geq0$ and $\overline{L}(-t)$, for $t\geq0$ is a process
independent of $L(t)$, $t\geq0$ and such that all finite dimensional
distributions of $\overline{L}(-t)$ coincide with those of $L(t)$,
$t\geq0$.

Our aim now is to split the solution $X$ of the equation \eqref{e1}
into the sum of a stationary process $r$ and an asymptotically (for
$t\rightarrow +\infty$) vanishing process $v$. To this end we will
split the  solution $X^{(n)}_m,\,n, m\,\in\,\N, $ of the
approximating equation into the sum of a stationary process
$r^{(n)}_m$ and an asymptotically  vanishing process $v^{(n)}_m$
satisfying some suitable properties.\\
Let $\{ {\cal B}_{n}\}_{n \in \mathbb{N}}$ be a sequence of finite
dimensional subspaces of the Banach space ${\cal B}$ introduced in
Sect.3  and $\{ \Pi_{n}\} $ a sequence of self-adjoint operators
from ${\cal H}$ onto ${\cal B}_{n}$ such that $\Pi_{n}x\rightarrow
x$ in ${\cal B}$, for arbitrary $x\in {\cal B}$. The existence of
such spaces can be proved as in \cite{Za88}, Prop.3.

Moreover let $F_{m}, m \in \mathbb{N}$ be the $m$-th Yosida
approximation of $F$ (i.e. $F_{m}:=m(mI-F)^{-1}$). We know that
$F_{m}$ is Lipschitz continuous and it satisfies the following
estimates:
\[|F_{m}(x)-F(x)|_{\cH}\rightarrow 0, \quad \quad m\rightarrow \infty, x \in D(F)\]
\[|F_{m}(x)|_{\cH}\leq |F(x)|_{\cH}, \quad \quad x \in D(F), \forall m \in \mathbb{N}\]

and
\[\langle F_{m}(x)-F_{m}(y),x-y\rangle \leq m|x-y|^{2}, \quad \forall m \in \mathbb{N}.\]

For any $n,m\in \mathbb{N}$ we consider the following families of
equations
\begin{align}\label{eq:EDSnm}
   \begin{cases}
     \dd X^{(n)}_m(t) = AX^{(n)}_m(t)\dd t+\Pi_nF_m(\Pi_n X^{(n)}_m(t)) \dd t +B\dd L(t), \\
     X^{(n)}_m(0)=\Pi_n x \in \cH,
   \end{cases}
\end{align}
and
\begin{align}\label{eq:EDSn}
   \begin{cases}
     \dd X^{(n)}_m(t) = AX^{(n)}_m(t)\dd t+\Pi_nF(\Pi_n X^{(n)}(t)) \dd t +B\dd L(t), \\
     X^{(n)}(0)=\Pi_n x\in \cH,
   \end{cases}
\end{align}
which can be seen as  approximating problems relative to \eqref{e1}.

There exists a well-established theory on stochastic evolution
equations in Hilbert spaces, see, for example, Da Prato and Zabcyck
\cite{DPZRosso}, that we shall apply in order to show that, for any
$n,m\in N$, equation \eqref{eq:EDSnm}, admits a unique solution
$X^{(n)}_m$. The precise statement concerning the well-posedness of
problems \eqref{eq:EDSnm} and \eqref{eq:EDSn} can be found in
\cite[Propositions 5.4 and 5.5]{AlDiPMa}. Moreover, it is possible
to prove an existence and uniqueness result for equation \eqref{e1}
in Section \ref{ssec:section1} through an approximating procedure on
finite dimensional spaces and with a Lipschitz continuous
nonlinearity. For more details concerning these results compare
\cite[Section 5]{AlDiPMa}, where the case of Gaussian noise is
treated. In the following we investigate the asymptotic properties
of the mild solution of \eqref{e1}. In particular, we are going to
prove that its unique solution admits a characterization in terms of
a stationary process $r$ and a process $v$ which vanishes at $t\to
\infty$. To this end, we proceed by splitting the solution
$X^{(n)}_m$  of the approximating problems into the sum of a
stationary process $r^{(n)}_m$ and a vanishing process $v^{(n)}_m$
satisfying suitable properties.
 Let us define the two sequences of processes $r^{(n)}_m$ and
$v^{(n)}_m$ respectively, in the sense of the definition \ref{ppp}
below for $r_m^{(n)}$ and $v_m^{(n)}(t)= X_m^{(n)}(t)-r_m^{(n)}(t)$,
as solutions of the equations:
\begin{align}
r^{(n)}_{m}(t)&=\int^{t}_{-\infty}
S(t-s)\Pi_{n}F_{m}\Pi_{n}(r^{(n)}_{m}(s))\dd s+{\bar{L}}^{\infty,n}_A(t) \label{eq:rnm}\\
v^{(n)}_{m}(t)&=S(t)\,x-\int^{0}_{-\infty}
e^{(t-s)A}\Pi_{n}F_{m}\Pi_{n}(r^{(n)}_{m}(s))\dd s- \int_{-\infty}^0 e^{(t-s)A}\Pi_n B\dd \bar{L}(s) \notag\\
&+\int^{t}_{0}e^{(t-s)A}[\Pi_{n}F_{m}(\Pi_{n}X^{(n)}_{m} (s))\dd
s-\Pi_{n}F_{m} (\Pi_{n}r^{(n)}_{m}(s))]\dd s, \label{eq:vnm}
\end{align}
where $S(t)$ is as in Section \ref{sec:OU}, $F_{m},\,\Pi_{n}$ are as
above and $\bar{L}^{\infty,n}_A(t)$, $t \in \mathbb{R}$, is defined
by
$\bar{L}^{\infty,n}_A(t)=\lim_{a\longrightarrow\,+\infty}\bar{L}^{a,n}_A(t)$,
with
\begin{align*}
L^{a,n}_A(t):=\int^{t}_{-a}S(t-s)\,\Pi_nB\,\dd\bar{L}(s),\,t \geq -a
\; , \; a \geq 0
\end{align*}
 We claim that
the random variable $L^{a,n}_A(t)$ is well-defined, for any $t \in
\mathbb{R}$. In fact, $L^{a,n}_A(t)$, $t \geq -a$, can be split into
the following sum:
\begin{align*}
  L^{\infty,n}_A(t)= \lim_{a \to +\infty} \int^{0}_{-a}S(t-s)\,\Pi_nB\,\dd\bar{L}(s)
     + \int_0^t S(t-s)\,\Pi_nB\,\dd\bar{L}(s)
\end{align*}
and the second term is easily seen to be  well-defined for any
$t\geq -a$. Concerning the first term we notice that, for any $a
\geq 0, t \geq -a$
\begin{multline*}
    \int^{0}_{-a}S(t-s)\,\Pi_nB\,\dd\bar{L}(s) = \sum_{k=1}^n \left(\int_{-a}^0 e^{-\lambda_k(t-s)}
   b_k \beta_k \dd \bar{L}^k(s) \right) e_k= \\ \sum_{k=1}^n e^{-\lambda_kt}\left(\int_{-a}^0 e^{\lambda_k s}
   b_k  \beta_k \dd \bar{L}^k(s)\right) e_k
   =  -\sum_{k=1}^n e^{-\lambda_kt}\left(\int_0^{a} e^{-\lambda_k s}
   b_k \beta_k \dd L^k(s) \right) e_k.
\end{multline*}
Now, taking into account Remark \ref{rem:LAinfty} in Subsection
\ref{ssec:invOU}, we see that
\begin{align*}
   \lim_{a \to +\infty} \sum_{k=1}^\infty \left(\int_0^{a} e^{-\lambda_k s}
   b_k \beta_k \dd L^k(s) \right) e_k
\end{align*}
is a well-defined $\cH$-valued random variable; hence
\begin{align*}
   \bar{L}_A^{\infty,n}(t)&= \int_{-\infty}^0 e^{(t-s)A } \Pi_nB\dd \bar{L}(s) \:,\: t \in \mathbb{R}
\end{align*}
and, taking $n \rightarrow +\infty$:
\begin{align*}
     L_A^\infty(t):= \int_{-\infty}^t e^{(t-s)A } B\dd \bar{L}(s)
\end{align*}
are well-defined too,for any $t \in \mathbb{R}$.

We give the following notion of solution for equation
\eqref{eq:rnm}.
\begin{Def}\label{ppp} An $\mathcal{F}_{t}$- adapted process $r^{(n)}_{m}$ is
said to be a mild solution to equation \eqref{eq:rnm} if it
satisfies the integral equation \eqref{eq:rnm} for any $t \in
\mathbb{R}.$
\end{Def}
\begin{theo} \label{BHRA} For any $n,m\in \mathbb{N},$ there exists a unique mild
solution $r^{(n)}_{m}$ to the equation \eqref{eq:rnm}, such that
\begin{equation}
\sup_{t \in \mathbb{R}}\mathbb{E}|r^{(n)}_{m}(t)|^{p}_{\cH} \leq
C_{p},
\end{equation}
for every $p\geq 2$ and for some positive constant
$C_{p}$(independent on $n$ and $m$). Further, $r^{(n)}_{m}$ is a
stationary process, that is, for every $h\in \mathbb{R}^{+}, k \in
\mathbb{N},$ any $-\infty  < t_{1}\ldots \leq t_{k} < +\infty$ and
any $A_{1},\ldots, A_{k} \in \mathcal{B}(\cal{H})$ we have
\[\mathbb{P}(r^{(n)}_{m}(t_{1}+h) \in A_{1},\ldots, r^{(n)}_{m}(t_{k}+h) \in A_k )=\mathbb{P}(r^{(n)}_{m}(t_{1}) \in A_{1}, \ldots , r^{(n)}_{m}(t_{n}) \in A_{k}).\]
\end{theo}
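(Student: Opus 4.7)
The plan is to construct $r^{(n)}_m$ as the unique fixed point of the integral operator associated with \eqref{eq:rnm}, to derive the $L^p$-bound by exploiting the dissipativity of $A+F$ transferred to the approximants, and to read off stationarity from uniqueness together with the stationary increments of the two-sided L\'evy process.

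First I fix $n,m\in\N$ and work in the Banach space $\mathcal{X}_p$ of $(\cF_t)$-adapted $\cH$-valued processes $Y$ on $\R$ with norm $\|Y\|_p:=\sup_{t\in\R}(\E|Y(t)|_\cH^p)^{1/p}<\infty$. The stochastic convolution $\bar L^{\infty,n}_A$ lies in $\mathcal{X}_p$ by a two-sided extension of Remark \ref{rem:LAinfty} together with the integrability assumed on the L\'evy measure $\nu_\R$. Since $F_m$ is globally Lipschitz, $\Pi_n$ is a contraction, and $\|S(t)\|\le e^{-\lambda_1 t}$ for $t\ge 0$ (Hypothesis \ref{hp:A}), the map
\[
(\Psi Y)(t):=\int_{-\infty}^{t} S(t-s)\Pi_n F_m(\Pi_n Y(s))\,\dd s+\bar L^{\infty,n}_A(t)
\]
sends $\mathcal{X}_p$ into itself. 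Contractivity of $\Psi$ is obtained either through the exponentially weighted norm $\|Y\|_{p,\kappa}:=\sup_{t\in\R}e^{-\kappa t}(\E|Y(t)|^p_\cH)^{1/p}$ with $\kappa$ sufficiently large, or by iterating $\Psi$ and estimating $\Psi^k$ using the semigroup decay; in either case Banach's theorem yields a unique $r^{(n)}_m\in\mathcal{X}_p$ solving \eqref{eq:rnm}.

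To make the $L^p$-bound uniform in $n,m$, I cannot use the Lipschitz constant of $F_m$ (which is of order $m$); instead I exploit that, since $\omega>\eta$, the Yosida approximant $F_m$ inherits the one-sided Lipschitz estimate $\langle F_m(u)-F_m(v),u-v\rangle\le\eta|u-v|^2_\cH$ with $\eta$ independent of $m$, so that $A+\Pi_n F_m\Pi_n$ satisfies a dissipativity estimate with constant $-(\omega-\eta)$ uniformly in $n,m$. Applying It\^o's formula for c\`adl\`ag processes to $x\mapsto|x|^p_\cH$ along $r^{(n)}_m$ and invoking this dissipativity gives
\[
\frac{\dd}{\dd t}\E|r^{(n)}_m(t)|^p_\cH \le -p(\omega-\eta)\,\E|r^{(n)}_m(t)|^p_\cH+C_p\bigl(1+\E|\bar L^{\infty,n}_A(t)|^p_\cH\bigr);
\]
the uniform-in-$n$ stationary $L^p$-bound on $\bar L^{\infty,n}_A$, together with Gronwall's lemma, then yields the asserted constant $C_p$.

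Stationarity follows from uniqueness: for every $h\in\R$ the process $t\mapsto r^{(n)}_m(t+h)$ solves \eqref{eq:rnm} with $\bar L$ replaced by $\bar L(\cdot+h)$; since the two-sided L\'evy process $\bar L$ has stationary finite-dimensional distributions, the shifted noise has the same law as the original one, and the uniqueness from the first step transports this equality of laws to the finite-dimensional distributions of $r^{(n)}_m$. The main obstacle I foresee is the uniformity in $m$ of the $L^p$-estimate: the fixed-point bound of Step 1 degenerates as $m\to\infty$, and to prevent this one must replace the Lipschitz estimate by dissipativity and then reconcile the dissipativity with the It\^o/Dynkin formula in the presence of L\'evy jumps; this is the technical heart of the proof.
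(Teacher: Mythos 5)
Your fixed-point construction of $r^{(n)}_m$ on the whole real line does not work as stated, and this is the genuine gap. The operator $\Psi$ has Lipschitz constant of order $L_m/\omega$, where $L_m\sim m$ is the (two-sided) Lipschitz constant of the Yosida approximant $F_m$, so $\Psi$ is not a contraction for large $m$ (nor even for fixed $m$ once $L_m>\omega$). Neither of your two proposed rescues applies to an integral over $(-\infty,t]$: the weight $e^{-\kappa t}$ with $\kappa>L_m-\omega>0$ blows up as $t\to-\infty$, so the weighted space excludes the stationary forcing $\bar L^{\infty,n}_A$ (whose $p$-th moment is constant in $t$) and hence cannot contain any candidate solution; and iterating $\Psi$ gives an iterated kernel equal to the $k$-fold convolution of $L_m e^{-\omega u}\mathbf{1}_{u>0}$, whose $L^1$-norm is $(L_m/\omega)^k$ and does not decay. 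These tricks work for Cauchy problems on a half-line $[-\xi,\infty)$, not for the stationary-solution problem on $\R$. The paper's construction is the standard remedy: solve the Cauchy problem \eqref{ABB} from time $-\xi$ (where Lipschitz theory applies), derive moment bounds uniform in $\xi,n,m$ from the dissipativity of $A+\Pi_nF_m\Pi_n$ with constant $-(\omega-\eta)$ (you correctly identified that the one-sided Lipschitz constant of $F_m$ is uniform in $m$ --- this is indeed the crux of the uniform $C_p$), show via the same dissipativity that $\xi\mapsto r^{(n)}_m(\cdot;-\xi)$ is Cauchy, and define $r^{(n)}_m$ as the limit. Uniqueness likewise comes from dissipativity plus Gronwall (two bounded solutions satisfy $|x(t)-y(t)|^2\le e^{-2(\omega-\eta)(t+\xi)}|x(-\xi)-y(-\xi)|^2\to 0$), not from a contraction estimate. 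Your It\^o/Gronwall derivation of the moment bound applied directly to $r^{(n)}_m$ is also mildly circular, since it presupposes $\sup_t\E|r^{(n)}_m(t)|^p<\infty$; working with the approximants $r^{(n)}_m(\cdot;-\xi)$, which have a genuine initial condition, removes this issue.

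Your stationarity argument (time-shift invariance in law of the two-sided noise plus uniqueness of the bounded solution, viewed as a measurable functional of the noise) is sound once existence and uniqueness are correctly established, and it is a legitimately different and arguably cleaner route than the paper's, which instead runs a Picard iteration \eqref{hhh} whose iterates are stationary and identifies its limit with $r^{(n)}_m$ by uniqueness. Both routes ultimately lean on the same uniqueness statement; yours avoids introducing the auxiliary iteration, while the paper's avoids having to argue measurability of the solution map.
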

\begin{proof} 
Let us first prove the uniqueness: Assume that $(x(t))_{t \in
\mathbb{R}}$ and $(y(t))_{t \in \mathbb{R}}$ are solutions of
equation \eqref{eq:rnm}. Dissipativity of $A+\Pi_{n}F_{m}(\Pi_{n})$,
which follows from the assumptions, implies
\begin{align*}
\dd|x(t)-y(t)|^{2}&=\langle
A(x(t)-y(t))+\Pi_{n}F_{m}(\Pi_{n}x(t))-\Pi_{n}F_{m}(\Pi_{n}y(t)),x(t),x(t)-y(t)\rangle
\dd t\\
&\leq -2(\omega-\eta)|x(t)-y(t)|^{2}\dd t
\end{align*}
and, by using Gronwall's lemma, we deduce that for any $\xi > 0$ and
$t\geq -\xi$ the following inequality holds
$$|x(t)-y(t)|^{2}\leq |x(-\xi)-y(-\xi)|^{2}e^{-2(\omega-\eta)(t+\xi)}.$$
Letting $\xi\rightarrow +\infty$ we conclude that $x(t)=y(t)$ for
any $t \in \mathbb{R}.$\\
For the existence of a solution $r_m^{(n)}(t)$ to \eqref{eq:rnm}
let $r^{(n)}_{m}(t, -\xi),\,\xi > 0$ be  the unique solution of the
equation:
\begin{equation} \label{ABB}\left
\{\begin{array}{lll}
\dd r^{(n)}_{m}(t,-\xi)=Ar^{(n)}_{m}(t)dt+\Pi_{n}F_{m}\Pi_{n}(r^{(n)}_{m}(t,-\xi))dt+\Pi_{n}B\dd L(t), \quad t \geq -\xi \; ,\\
 r^{(n)}_{m}(-\xi,-\xi)=e^{-\xi A}x
\end{array}\right.
\end{equation}
Let us note that equation \eqref{ABB} has a unique solution being a
Cauchy problem with Lipschitz coefficients, see, e.g., \cite{PeZa}
for details. From the smoothness properties of the stochastic
convolution, one can assume that $r^{(n)}_{m}(\cdot; -\xi)$ admits a
${\cal B}$- c\`{a}dl\`{a}g version, which we still denote by
$r^{(n)}_{m}(\cdot; -\xi)$. Consequently, for any $t\in \mathbb{R}$,
we can define $r^{(n)}_{m}(t)$ as the limit of $r^{(n)}_{m}(t;-\xi)$
for $\xi \rightarrow +\infty$ and this turns out to be the solution
of equation \eqref{eq:rnm}.
Now we will prove that, for any $t\geq -\xi$, $\xi >0$ and $p\geq 2$
the following estimate holds
$$\sup_{-\xi \leq t \leq 0}\mathbb{E}\| r^{(n)}_{m}(t;-\xi)\|^{p}_{\cH}< C_{p} \:,$$
where $C_{p}$ is a positive constant independent on $n,m$ and $\xi$,
but possibly depending on $p$.  For simplicity, and without loss of
generality,  we consider the case $p=2a, a \in \mathbb{N}.$ We want
to apply It\^{o}'s formula to the processes
$|r^{(n)}_{m}(t,-\xi)|^{2a}_{\cH}.$ To this end, we recall the
expressions
for the first and second derivatives of the function $F(x):=|x|^{2a}.$\\
We have
\begin{align*}
&\nabla F(x)=2a|x|^{2(a-1)}x\\
&\frac{1}{2}Tr(Q\nabla
F^{2}(x))=aTr(Q)|x|^{2(a-1)}+(a-1)a|x|^{2(a-2)}|Bx|^{2}.
\end{align*}
Hence
\begin{align*}
\dd |r^{(n)}_{m}(t;-\xi)|^{2a}&=\langle
2a|r^{(n)}_{m}(t;-\xi)|^{2(a-1)}r^{(n)}_{m}(t;-\xi),dr^{(n)}_{m}(t;-\xi)\rangle \\
&+aTr(Q)|r^{(n)}_{m}(t;-\xi)|^{2(a-1)}dt+(a-1)a|r^{(n)}_{m}(t;-\xi)|^{2(a-2)}|Br^{(n)}_{m}(t;-\xi)|^{2}
\dd t.
\end{align*}
Now using the dissipativity of $A+F$, for sufficient small $\epsilon
>0,$ we get
\begin{align*}
\dd|r^{(n)}_{m}(t;-\xi)|^{2a}&=2a|r^{(n)}_{m}(t;-\xi)|^{2(a-1)}\langle
Ar^{(n)}_{m}(t;-\xi)+
\Pi_{n}F_{m}\Pi_{n}(r^{(n)}_{m}(t;-\xi)),r^{(n)}_{m}(t;-\xi)\rangle \dd t\\
&+2a|r^{(n)}_{m}(t;-\xi)|^{2(a-1)}
\langle \Pi_{n}B\dd L(t),r^{(n)}_{m}(t;-\xi)\rangle\\
&+aTr(Q)|r^{(n)}_{m}(t;-\xi)|^{2(a-1)}dt+(a-1)a|r^{(n)}_{m}(t;-\xi)|^{2(a-2)}|
Br^{(n)}_{m}(t;-\xi)|^{2}dt\\
&\leq
-2(\omega-\eta)|r^{(n)}_{m}(t;-\xi)|^{2a}_{\cH}+2a|r^{(n)}_{m}(t;-\xi)|^{2(a-1)}\langle
F(0),r^{(n)}_{m}(t;-\xi)\rangle \dd t\\
&+C_{a,Q}|r^{(n)}_{m}(t;-\xi)|^{2(a-1)}+2a|r^{(n)}_{m}(t;-\xi)|^{2(a-1)}\langle
\Pi_{n}B\dd L(t),r^{(n)}_{m}(t;-\xi)\rangle\\
&\leq
-2(\omega-\eta)|r^{(n)}_{m}(t;-\xi)|^{2a}_{\cH}+2a|r^{(n)}_{m}(t;-\xi)|^{2a-1}|F(0)|dt\\
&+C_{a,Q}|r^{(n)}_{m}(t;-\xi)|^{2(a-1)}+2a|r^{(n)}_{m}(t;-\xi)|^{2(a-1)}\langle
\Pi_{n}B\dd L(t),r^{(n)}_{m}(t;-\xi)\rangle\\
&\leq -2(\omega-\eta-\epsilon)|r^{(n)}_{m}(t;-\xi)|^{2a}_{\cH}\\
& +\frac{1}{\epsilon}C_{a,Q,F(0)} \dd
t+2a|r^{(n)}_{m}(t;-\xi)|^{2(a-1)} \langle \Pi_{n}B\dd L(t),
r^{(n)}_{m} (t;-\xi)\rangle.
\end{align*}
and, integrating over $[-\xi,t],t\geq -\xi$ we obtain:
\begin{align*}
|r^{(n)}_{m}(t;-\xi)|^{2a}\leq e^{-2a \xi \omega
}|x|^{2a}-2(\omega-\eta-\epsilon)\int^{t}_{-\xi}|r^{(n)}_{m}(s;-\xi)|^{2a}_{\cH}
\dd s\\
 +\int^{t}_{\xi}|r^{(n)}_{m}(s;-\xi)|^{2(a-1)} \langle
\Pi_{n} B\dd L(s),r^{(n)}_{m}(s;-\xi)\rangle+
\frac{1}{\epsilon} C_{a, Q, F(0)} (t+\xi).
\end{align*}
Notice that the term
\begin{align*}
 \int^{t}_{-\xi} \langle |r^{(n)}_{m}(s;-\xi)|^{2(a-1)} \Pi_{n} B\dd L(s), r^{(n)}_{m} (s;-\xi)\rangle
 \end{align*}
is a square integrable martingale with mean $0$, so that taking the
expectation of both members in the previous inequality we obtain
\begin{multline*}
\mathbb{E}|r^{(n)}_{m}(s;-\xi)|^{2a} \leq
e^{-2(\xi+t)\omega} |x|^{2a}+\frac{1}{\epsilon} C_{a, Q, F(0)}(\xi+t)-\\
2\left( \omega-\eta +\epsilon\right) \int^{t}_{-\xi}
\mathbb{E}|r^{(n)}_{m}(r;-\xi)|^{2} \dd s\:.
\end{multline*}
Then applying  Gronwall's lemma we get
\begin{eqnarray}\mathbb{E}|r^{(n)}_{m}(t;-\xi)|^{2a} &\leq& [e^{-2a(\xi
+t)\omega}|x|^{2a}+\frac{1}{\epsilon}C_{a,Q,F(0)}(\xi+t)]e^{-2a(\omega-\eta-\epsilon)\xi}\nonumber\\
&\leq& C,
\end{eqnarray}
where $C$ is a suitable constant independent on $m,n $ and $\xi$.

Moreover, in a similar way we can prove that, for any fixed $t \in
\mathbb{R}$, the sequence $\{ r^{(n)}_{m} (t;-\xi)\}_{-\xi \leq t}$
is a Cauchy sequence, uniformly on $t$.

Now let $0\leq \gamma \leq \xi .$ We need to estimate the norm:
\begin{align*}
 \sup_{t\geq -\gamma} \mathbb{E}|r^{(n)}_{m}(t;-\xi)-r^{(n)}_{m}(t;-\gamma)|^{2a}.
 \end{align*}
To this end we notice that the process $y_{\xi,\gamma}
(t):=r^{(n)}_{m}(t;-\xi)-r^{(n)}_{m}(t;-\gamma)$ can be written as:
\begin{equation} \label{ARBH}
\begin{aligned}
y_{\xi,\gamma}(t)&=(e^{-\xi a}-e^{-\gamma A} )x +\int^{-\gamma}_{-\xi}e^{(-\gamma-s)A}\Pi_{n}F_{m}(\Pi_{n}r^{(n)}_{m}(s;-\xi))ds\\
&+ \int^{-\gamma}_{-\xi}e^{(-\gamma-s)A}B\dd L(s)\nonumber\\&
+\int^{t}_{-\gamma}e^{(t-s)A}[\Pi_{n}F_{m}
(\Pi_{n}r^{(n)}_{m}(s;-\xi))-\Pi_{n}F_{m}
(\Pi_{n}r^{(n)}_{m}(s;-\gamma))]ds.
\end{aligned}
\end{equation}

Moreover, the stochastic differential of $y_{\xi,\gamma}(t)$ is
given by
\[\dd y_{ \xi, \gamma}(t)=Ay_{\xi , \gamma }dt+[F_{n,m}(r^{(n)}_{m}(t;-\xi))-F_{n,m}(r^{(n)}_{m}(t;-\gamma))]dt,\]
since the three terms in \eqref{ARBH} do not depend on $t$. By
applying It\^o's formula to $|y_{\xi,\gamma}|^{2a}$ and reasoning as
above we get
\begin{align*}
\mathbb{E}|y_{\xi, \gamma }(t)|^{2a}\leq |e^{-\xi
x}|^{2a}+\mathbb{E}|y_{\xi,\gamma}(-\gamma)|^{2a}-2(\omega -\eta
)\int^{t}_{-\xi} \mathbb{E}|y_{\xi, \gamma }(r)|^{2a} \dd s.
\end{align*}

Gronwall's lemma then implies
\begin{eqnarray}\sup_{t\geq -\gamma}\mathbb{E}|y_{\xi, \gamma
}(t)|^{2a}&\leq&(\mathbb{E}|y_{\xi,\gamma}(-\gamma)|^{2a}+|e^{-\xi
Ax}|^{2a})e^{-2(\omega-\eta )(\xi+t)} \nonumber\\&\leq& C,
\end{eqnarray}
where $C$ is a constant independent on $\xi$. We then conclude that
for any $t \in \mathbb{R}$  the limit $r^{(n)}_{m}(t):=\lim_{\xi
\rightarrow \infty}r^{(n)}_{m}(t;-\xi)$ exists in
$L^{2}(\Omega,\mathbb{P};{\cal B})$ and
moreover,\\
\[\displaystyle \sup_{t\geq -\xi}|\mathbb{E}r^{(n)}_{m}(t)|^{2a} \leq C.\]
In addition, by the condition at $t = -\xi$ in \eqref{ABB}, we
deduce that $\lim_{t\rightarrow -\infty}r^{(n)}_{m}(t)=0.$

Finally, for any $n,m \in \mathbb{N}$ we are going to show that  the
process $r^{(n)}_{m}$ is stationary. In order to prove this
statement, we adapt to our case the argument given in \cite{Marc2}.
In particular, we introduce the following Picard iteration:
\begin{equation}\label{hhh}\left\{\begin{array}{lll}
r^{(n,0)}_{m} (t)=x\\
r^{(n,k+1)}_{m} (t)=\int^{t}_{-\infty}
e^{(t-s)A}\Pi_{n}F_{m}(\Pi_{n}r^{(n,k)}_{m}(s))\dd
s+L_{A,-\infty}(t).
\end{array}\right.
\end{equation}

We notice that  the limit $\lim_{k\rightarrow
\infty}r^{(n,k)}(t)=\tilde{r}^{(n)}_{m}(t)$ exists (see, e.g,
\cite{DPZVerde}) and it is a stationary process. The crucial point
is that $\tilde{r}^{(n)}_{m}$ and $r^{(n)}_{m}$ coincide. In fact,
if we pass to the limit in \eqref{hhh}, we see that $r^{(n)}_{m}$
solves equation \eqref{eq:rnm} so that, by uniqueness,
$\tilde{r}^{(n)}_{m}\equiv r^{(n)}_{m}.$ Consequently, $r^{(n)}_{m}$
is stationary. This completes the proof of theorem \eqref{BHRA}.
\end{proof}

Taking into account our assumptions on the double sided convolution
process $\bar{L}_A(t)$ we will discuss  existence and uniqueness of
a mild solution for the equation for the process $v^{(n)}_{m}$ and
we will show that it vanishes, in a suitable sense, as $t\rightarrow
+\infty.$

\begin{prop} Under the assumptions given in Theorem \ref{t1} and in Theorem \ref{t2} we have that
for any $n,m \in \mathbb{N}$ there exists a unique mild solution
$(v^{(n)}_{m}(t))_{t\geq 0}$ of equation \eqref{eq:vnm}. Moreover,
for any $p\geq 1,$ we have the following bound:
\begin{align*}
 \sup_{t\geq 0} \mathbb{E}|v^{(n)}_{m}
(t)|^{p}\leq C_{p},
\end{align*}
where $C_{p}$ is a positive constant independent of $n$ and $m$. In
addition, we have the following limit
\begin{align*}
\lim_{t\rightarrow +\infty} \mathbb{E}|v^{(n)}_{m} (t)|^{p}=0\; , \;
\textit{ for any } p \geq 1\; .
\end{align*}
\end{prop}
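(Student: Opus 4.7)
The plan is to exploit the natural identification $v^{(n)}_m(t) = X^{(n)}_m(t) - r^{(n)}_m(t)$ for $t\geq 0$, which converts the analysis of $v^{(n)}_m$ into the study of a pathwise--deterministic dissipative evolution equation. First I would check that subtracting the mild formulation \eqref{eq:rnm} of $r^{(n)}_m$, evaluated at $t\geq 0$, from the mild formulation of $X^{(n)}_m$ given by \eqref{eq:EDSnm} indeed reproduces \eqref{eq:vnm}: since $\bar{L}(s)=L(s)$ for $s\geq 0$, the stochastic convolutions on $[0,t]$ cancel exactly, and only the two backward-in-time integrals stemming from the past of $r^{(n)}_m$ on $(-\infty,0]$ survive. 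Existence of $v^{(n)}_m$ then follows from the existence of $X^{(n)}_m$ (Proposition 5.4 of \cite{AlDiPMa}) and of $r^{(n)}_m$ (Theorem \ref{BHRA}); uniqueness follows either directly from that of the two components, or by repeating the dissipative Gronwall argument used in the uniqueness step of Theorem \ref{BHRA}.

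After this cancellation, $v^{(n)}_m$ satisfies on $[0,\infty)$ the mild equation
\begin{align*}
 v^{(n)}_m(t) = S(t)\,v^{(n)}_m(0) + \int_0^t S(t-s)\,\Pi_n\bigl[F_m(\Pi_n X^{(n)}_m(s)) - F_m(\Pi_n r^{(n)}_m(s))\bigr]\,\dd s,
\end{align*}
with random initial datum $v^{(n)}_m(0) = x - r^{(n)}_m(0)$. Because the driving noise has been eliminated and $F_m$ is Lipschitz, this is a classical Lipschitz evolution equation whose solution is strongly continuous in $t$. Applying the chain rule to $|v^{(n)}_m(t)|^{2a}_{\cH}$, $a\in\N$, and using (i) $\langle Av,v\rangle\leq -\omega|v|^2$ from strict negativity of $A$, (ii) the $\eta$--dissipativity $\langle F_m(u)-F_m(v),u-v\rangle\leq\eta|u-v|^2$ of the Yosida approximants (a standard consequence of the $\eta$--dissipativity of $F$), and (iii) the self-adjointness of $\Pi_n$ with $|\Pi_n v|\leq|v|$, one obtains the pathwise inequality
\begin{align*}
 \frac{\dd}{\dd t}|v^{(n)}_m(t)|^{2a} \leq -2a(\omega-\eta)\,|v^{(n)}_m(t)|^{2a},
\end{align*}
hence by Gronwall $|v^{(n)}_m(t)|^{2a}\leq |v^{(n)}_m(0)|^{2a}\,e^{-2a(\omega-\eta)t}$ almost surely.

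Taking expectations and combining with the uniform moment bound $\sup_{t\in\R}\E|r^{(n)}_m(t)|^{2a}\leq C$ from Theorem \ref{BHRA} (so that $\E|v^{(n)}_m(0)|^{2a}$ is bounded uniformly in $n,m$), I obtain
\begin{align*}
 \E|v^{(n)}_m(t)|^{2a} \leq C_p\, e^{-2a(\omega-\eta)t},
\end{align*}
with $C_p$ independent of $n,m$. Since $\omega>\eta$, this yields simultaneously $\sup_{t\geq 0}\E|v^{(n)}_m(t)|^{2a}\leq C_p$ and $\E|v^{(n)}_m(t)|^{2a}\to 0$ as $t\to +\infty$. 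The extension to an arbitrary real $p\geq 1$ is a one-line application of Jensen's inequality, choosing $a\in\N$ with $2a\geq p$.

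The main delicate point is the rigorous justification of the chain rule on the mild form when $v^{(n)}_m(0)\notin D(A)$. I would handle this by the standard regularization argument (Yosida approximation of $A$, or the decomposition $v^{(n)}_m = \Pi_n v^{(n)}_m + (I-\Pi_n)v^{(n)}_m$: the complementary component decays freely under $S(t)$, while the projected component lives in the finite-dimensional $\cB_n$ where calculus is unproblematic) followed by a passage to the limit. The positive sign of $\omega-\eta$, which drives the exponential decay, is already built into the standing hypotheses on $(A,F)$, so no new assumption is needed; the argument is essentially a reuse of the dissipative-energy machinery already exploited in the proof of Theorem \ref{BHRA}.
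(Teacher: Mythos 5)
Your proposal is correct and follows essentially the same route as the paper: identify $v^{(n)}_m=X^{(n)}_m-r^{(n)}_m$, note that the noise cancels, apply the chain rule/It\^o formula to $|v^{(n)}_m(t)|^{2a}$, and use the dissipativity margin $\omega-\eta>0$ together with Gronwall's lemma to get exponential decay of $\mathbb{E}|v^{(n)}_m(t)|^{2a}$. The only cosmetic difference is that you bound $\mathbb{E}|v^{(n)}_m(0)|^{2a}$ directly via the uniform moment estimate on $r^{(n)}_m$ from Theorem \ref{BHRA}, whereas the paper expands $v^{(n)}_m(0)$ explicitly and invokes the hypothesis of Theorem \ref{t2}; both yield a constant independent of $n,m$.
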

\begin{proof} The  existence and uniqueness of a mild solution for the process $(v^{(n)}_{m})$ follow straightforward by results in finite dimensions,
 see.eg, \cite{APD}.

Without loss of generality we can assume that $p=2a$ for $a \in
\,\N$. By the dissipativity of the mapping $\Pi_{n}F_{m}\Pi_{n}$ and
the fact that $A\leq -\omega$, we get
\begin{align*}
d|v^{(n)}_{m}(t)|^{2a}&=\dd|X^{(n)}_{m}(t)-r^{(n)}_{m}(t)|^{2a}\\
&=2a\langle
Av^{(n)}_{m}(t)+\Pi_{n}F_{m}(\Pi_{n}X^{(n)}_{m}(t))-\Pi_{n}F_{m}(\Pi_{n}r^{(n)}_{m}(t)),v^{(n)}_{m}(t)\rangle
|v^{(n)}_{m}(t)|^{2a-2} \, \dd t\\
& -2a\omega |v^{(n)}_{m}(t)|^{2a}+2a\eta |v^{(n)}_{m} (t)|^{2a},
\end{align*}
so that integrating on $[0,t]$ and applying Gronwall's lemma, we
obtain
\begin{align*} \sup_{t \in [0,T]}{\mathbb{E}}|v^{(n)}_{m}(t)|^{2a}
&\leq e^{-(\omega-\eta) T}{\mathbb{E}}|v^{(n)}_{m}(0)|^{2a}
\nonumber\\&\leq C_{a}e^{-(\omega-\eta) T}
\left[|x|^{2a}_{\cH}+{\mathbb{E}}\int^{0}_{-\infty} |e^{-sA}
\Pi_{n}F_{m}(\Pi_{n}r^{(n)}(s))|^{2a}_{\cH} \dd s+ \right.
\\& \quad  +
\left. \left| \int^{0}_{-\infty} e^{-sA} \Pi_{n} B\dd
L(s)\right|^{2a}
\right]\\
&\leq C_{a}e^{-(\omega-\eta)T} \left[K_a+{\sup}_{t\geq0}\,\mathbb{E}
\left( \left| L_A(t) \right|_{\mathcal{H}} + \left|
F\left(L_A(t)\right) \right|_{\mathcal{H}} \right)\,\right] \:,
\end{align*}
where $C_{a}, K_a$ are positive constants depending only on $a$. Now
from this inequality and the assumptions in Theorem \ref{t2}, we
deduce that
\begin{equation}
 \sup_{t \in [0,T]}
\mathbb{E}|v^{(n)}_{m} (t)|^{2a} \leq C_{a} e^{-\omega T} (K_a+C),
\end{equation}
with $\omega-\eta > 0,$ the result now follow by letting
$T\rightarrow +\infty$.
\end{proof}
The next result states that $r^{(n)}_{m} $ and $v^{(n)}_{m}$
converge respectively to stochastic processes $r$ and $v$ in
$L^{p}(\Omega, C([0,T];{\cal B})), p \geq 1, T>0$, where ${\cal B}$
is as in Sect.3, moreover it also shows additional properties of $r,
v$.

\begin{prop} There exist a stationary process $r$ and $a$
process $v$ in $L^{p}(\Omega;C([0,T];{\cal B}))$ such that
\begin{align*}
 &\lim_{n,m \rightarrow \infty }r^{(n)}_{m} (t)=r(t) \\
& \lim_{n,m \rightarrow \infty }v^{(n)}_{m} (t)=v(t).
\end{align*}
Further, for any $p\geq 1$, $\lim_{t\rightarrow +\infty}
\mathbb{E}|v(t)|^{p}=0$.
\end{prop}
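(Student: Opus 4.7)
The plan is to obtain $r$ and $v$ as $L^p(\Omega; C([0,T];\mathcal{B}))$ limits of the doubly indexed approximants by a standard Cauchy argument powered by dissipativity, and then to transfer the relevant qualitative properties (stationarity of $r$, asymptotic decay of $v$) to the limits. The key ingredients I would exploit are: (i) the uniform-in-$n,m$ moment bounds $\sup_{t\in \mathbb{R}}\mathbb{E}|r^{(n)}_m(t)|^p_{\mathcal{H}}\le C_p$ from Theorem \ref{BHRA}, (ii) the uniform-in-$n,m$ bound $\sup_{t\ge 0}\mathbb{E}|v^{(n)}_m(t)|^p\le C_p$ and the exponential estimate from the previous proposition, (iii) the Yosida estimates $|F_m(x)-F(x)|_{\mathcal{H}}\to 0$, $|F_m(x)|\le |F(x)|$, and (iv) $\Pi_n x\to x$ in $\mathcal{B}$.

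First, I would establish the Cauchy property of $\{r^{(n)}_m\}$. Fix $n,m$ and $n',m'$ with $m,m'\ge N$, $n,n'\ge N$, and set $w(t):=r^{(n)}_m(t)-r^{(n')}_{m'}(t)$. Writing both sides as mild equations, applying It\^o's formula to $|w(t)|^{2a}$ on $[-\xi,t]$ and using the dissipativity inequality
\[
\langle \Pi_nF_m(\Pi_n x)-\Pi_{n'}F_{m'}(\Pi_{n'}y),x-y\rangle\le -( \omega-\eta)|x-y|^2+R_{n,n',m,m'}(x,y),
\]
where the remainder $R$ collects the errors coming from $\Pi_n\ne\Pi_{n'}$ and $F_m\ne F_{m'}$, Gronwall's lemma yields
\[
\sup_{t\in\mathbb{R}}\mathbb{E}|w(t)|^{2a}\le C\int_{-\infty}^{t}e^{-2a(\omega-\eta)(t-s)}\mathbb{E}\,R_{n,n',m,m'}(r^{(n)}_m(s),r^{(n')}_{m'}(s))\,\mathrm{d}s,
\]
which, by the uniform $p$-th moment bounds on $r^{(\cdot)}_\cdot$ together with the pointwise convergences $F_m\to F$ and $\Pi_n\to \mathrm{Id}$ (and dominated convergence, using the majorant $|F_m|\le|F|$), tends to zero as $N\to\infty$. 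Hence $r^{(n)}_m\to r$ in $L^p(\Omega;C([0,T];\mathcal{B}))$, after passing to the $\mathcal{B}$-valued c\`adl\`ag version guaranteed by the stochastic-convolution regularity from Section 2 and Remark \ref{Remark1}.

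For $v^{(n)}_m$ I would argue similarly: setting $\tilde w(t):=v^{(n)}_m(t)-v^{(n')}_{m'}(t)$ and using $v^{(n)}_m=X^{(n)}_m-r^{(n)}_m$, apply It\^o to $|\tilde w(t)|^{2a}$ on $[0,T]$. The dissipativity of $A+\Pi_n F_m\Pi_n$ (estimate in the previous proposition) yields an exponentially decreasing bound in terms of (a) $|\tilde w(0)|^{2a}$, which goes to $0$ because $r^{(n)}_m(0)\to r(0)$ in $L^{2a}$ and the initial data are compatible; (b) the already-controlled $r^{(\cdot)}_\cdot$ differences; and (c) the errors from replacing $F_m$ by $F_{m'}$ and $\Pi_n$ by $\Pi_{n'}$, again handled by dominated convergence. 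A further Gronwall argument then gives $v^{(n)}_m\to v$ in $L^p(\Omega;C([0,T];\mathcal{B}))$. The exponential decay bound $\mathbb{E}|v^{(n)}_m(t)|^{p}\le C_p e^{-(\omega-\eta)t}$ from the preceding proposition passes through the $L^p$ limit in $n,m$, giving $\lim_{t\to+\infty}\mathbb{E}|v(t)|^{p}=0$ for every $p\ge 1$.

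Stationarity of $r$ is inherited directly from that of $r^{(n)}_m$: convergence in $L^p(\Omega;C([0,T];\mathcal{B}))$ implies convergence of all finite-dimensional distributions, so for any $t_1<\dots<t_k$, $h\in\mathbb{R}$ and Borel $A_1,\dots,A_k\subset\mathcal{H}$,
\[
\mathbb{P}(r(t_1+h)\in A_1,\dots,r(t_k+h)\in A_k)=\lim_{n,m}\mathbb{P}(r^{(n)}_m(t_1+h)\in A_1,\dots)=\mathbb{P}(r(t_1)\in A_1,\dots),
\]
where continuity boundaries may be handled in the usual way. The main obstacle I anticipate is item (c) in the previous paragraph: controlling the remainder terms coming from the non-Lipschitz nature of $F$ when one subtracts two different Yosida levels $F_m$ and $F_{m'}$. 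The argument has to exploit simultaneously the pointwise convergence $F_m(x)\to F(x)$, the pointwise domination $|F_m(x)|\le |F(x)|$, the integrability assumption \eqref{eq:intF}, and the uniform moment bound on $r^{(n)}_m$ in $\mathcal{B}$, to justify a dominated-convergence passage to the limit inside the stochastic convolution.
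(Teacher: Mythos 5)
Your route is genuinely different from the paper's, and the difference matters. You try to prove directly that $\{r^{(n)}_m\}$ is Cauchy by applying It\^o's formula to $|r^{(n)}_m-r^{(n')}_{m'}|^{2a}$ and absorbing the mismatch between $(\Pi_n,F_m)$ and $(\Pi_{n'},F_{m'})$ into a remainder $R_{n,n',m,m'}$ to be killed by dominated convergence. The paper never performs this comparison. Instead it argues by contradiction using three soft facts already in hand: (i) each $r^{(n)}_m$ is stationary, so $\mathbb{E}|r^{(k)}_{j}(t)-r^{(k')}_{j'}(t)|^{2a}$ is independent of $t$; (ii) $\mathbb{E}|v^{(n)}_m(t)|^{2a}\to 0$ as $t\to+\infty$ uniformly in $n,m$ (previous proposition); (iii) $X^{(n)}_m(t)=r^{(n)}_m(t)+v^{(n)}_m(t)$ converges as $n,m\to\infty$ (from the approximation theory behind Theorem \ref{t1}, cf.\ \cite[Section 5]{AlDiPMa}). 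If the $r$'s were not Cauchy, evaluating at large $t$ would contradict (iii). The convergence of $v^{(n)}_m$ then falls out by subtraction, and the decay of $v$ follows from a three-term triangle inequality $\mathbb{E}|v(t)|^{2a}\le c_a\mathbb{E}|X-X^{(n)}_m|^{2a}+c_a\mathbb{E}|v^{(n)}_m|^{2a}+c_a\mathbb{E}|r^{(n)}_m-r|^{2a}$, which is essentially equivalent to your ``pass the uniform bound through the limit'' step. The paper's argument buys exactly what your argument lacks: it never has to compare two different Yosida levels or two different Galerkin projections inside a dissipativity estimate.

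The point you yourself flag as ``the main obstacle'' is, in my view, a genuine gap rather than a technicality. Dissipativity controls $\Pi_nF_m(\Pi_n x)-\Pi_nF_m(\Pi_n y)$ paired against $x-y$, but the cross term $\Pi_nF_m(\Pi_n y)-\Pi_{n'}F_{m'}(\Pi_{n'}y)$ evaluated along $y=r^{(n')}_{m'}(s)$ has no sign and must be estimated in norm. To run dominated convergence you need a uniform-in-$(n',m',s)$ integrable majorant for $|F(r^{(n')}_{m'}(s))|_{\mathcal H}$; since $F$ has polynomial growth and maps bounded sets of $\mathcal B$ (not of $\mathcal H$) into bounded sets of $\mathcal H$, this requires uniform moment bounds on $r^{(n')}_{m'}$ in the $\mathcal B$-norm, whereas Theorem \ref{BHRA} only provides them in $\mathcal H$. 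Moreover $F_m(x)\to F(x)$ is a pointwise statement at fixed $x$, while your argument needs it along a sequence of arguments $\Pi_n r^{(n')}_{m'}(s)$ that varies with all the indices. None of this is supplied by the hypotheses as stated, so the Cauchy property of $r^{(n)}_m$ — the heart of the proposition — remains unproved in your proposal. If you want to keep a direct argument, you should either establish uniform $\mathcal B$-moment bounds for $r^{(n)}_m$ first, or switch to the paper's stationarity-plus-contradiction device, which circumvents the issue entirely. The remaining pieces of your proposal (transfer of stationarity via convergence of finite-dimensional distributions, and the decay of $v$) are fine.
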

\begin{proof} Again without loss of generality, we assume that
$p=2a,a \in \mathbb{N}.$ For the convergence of the sequence
$r^{(n)}_{m}$ and $v^{(n)}_{m}$ the proof is by contradiction.
Assume that there exists $\varepsilon > 0$ such that, for all $m , n
\in \N$ :
\[\displaystyle \sup_{k,k'>n,j,j'> m}\mathbb{E}|r^{(k)}_{j}(t)-r^{(k')}_{j'} (t)|^{2a}_{\cH} > 2 \varepsilon .\]
Since the difference of two stationary processes is stationary, the
expression on the left hand side is independent of time $t$. By
choosing $t$ large enough, thus making $\mathbb{E}|v^{(k)}_{j}
(t)|^{2a}$ and $\mathbb{E}|v^{(k')}_{j'} (t)|^{2a}$ sufficiently
small, it is easy to show that
\[\displaystyle \sup_{t\geq 0\, k,} \sup_{k'>n,j,j'>m} \mathbb{E}|X^{(k)}_{j}(t)-X^{(k')}_{j'}(t)|^{2a}> \varepsilon.\]
But this contradicts the fact that $\lim_{n,m \rightarrow \infty}
X^{(n)}_{m}(t)$ exists. As a consequence of the convergence of the
sequence $r^{(n)}_{m}$ and $X^{(n)}_{m}$
we obtain the convergence of $v^{(n)}_{m}.$\\
Now let us show that:
\begin{equation}
 \lim_{t\rightarrow +\infty}
\mathbb{E}|v(t)|^{2a}=0,
\end{equation}
where $v(t):= X(t)-r(t)$, $t \geq 0$.
 For this we have that:
\begin{multline*}
\mathbb{E}|v(t)|^{2a}=\mathbb{E}|X(t)-r(t)|^{2a}\leq
c_{a}\mathbb{E}|X(t)-X^{(n)}_{m}(t)|^{2a}\\
+c_{a}\mathbb{E}|X^{(n)}_{m}(t)-r^{(n)}_{m}(t)|^{2a}+c_{a}\mathbb{E}|r^{(n)}_{m}
(t)-r(t)|^{2a},
\end{multline*}
 for some strictly positive constant $c_{a}$
depending only on $a$. If $n,m$ are large enough, then the first and
third terms are less than $\varepsilon/c_{a},$ uniformly in $t\geq
1$. The second term is less than $ \varepsilon/c_{a},$ for
$t>T(\varepsilon)$, for some $T(\varepsilon)$ independent of $n, m$
for all $n, m$ large. Combining the previous three terms we have
shown that $\mathbb{E}|v(t)|^{2a} < \varepsilon $ for sufficient
large positive $t$.
\end{proof}
From Theorem \ref{t2} we had alrea\dd y the existence of the
invariant measure for the process $X$. We shall now prove that is
given by the law of the stationary process $r$:
\begin{theo} The invariant measure for the process $(X(t))_{t\geq
0}$, is given by the law $\mathcal{L}(r(t))$ of the stationary
process $r$.
\end{theo}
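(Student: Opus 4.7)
The plan is to identify the law $\mu := \mathcal{L}(r(t))$ of the stationary component with the unique invariant measure $\mu_X$ produced by Theorem \ref{t2}, by showing that both coincide with the weak limit of the one-dimensional marginals $\mathcal{L}(X(t))$ as $t\to+\infty$.

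First I would transfer the stationarity of each approximant $r^{(n)}_m$, established in Theorem \ref{BHRA}, to the limit $r$, using the convergence $r^{(n)}_m \to r$ in $L^p(\Omega; C([0,T];\mathcal{B}))$ from the preceding proposition; this ensures that $\mu := \mathcal{L}(r(t))$ is independent of $t \in \mathbb{R}$. The next step is to exploit the decomposition $X(t) = r(t) + v(t)$ together with $\mathbb{E}|v(t)|^p \to 0$, which forces $X(t) - r(t) \to 0$ in probability. Consequently, for every bounded Lipschitz test function $\psi$ one has $|\mathbb{E}\psi(X(t)) - \mathbb{E}\psi(r(t))| \to 0$, and since $\mathbb{E}\psi(r(t)) = \int_{\mathcal{H}} \psi\,\mathrm{d}\mu$ for every $t$ by the stationarity established above, I obtain $\mathbb{E}\psi(X(t)) \to \int_{\mathcal{H}} \psi\,\mathrm{d}\mu$.

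On the other hand, Theorem \ref{t2} provides a unique invariant measure $\mu_X$ which is exponentially mixing with exponent $\omega+\eta$, so $P_t\psi(x) \to \int_{\mathcal{H}} \psi\,\mathrm{d}\mu_X$ for every $x\in\mathcal{H}$ and every $\psi\in \mathrm{Lip}(\mathcal{H})$. Since $P_t\psi(x) = \mathbb{E}[\psi(X(t))]$ when $X(0)=x$, the two limits must agree for every bounded Lipschitz $\psi$; as such functions separate Borel probability measures on the Polish space $\mathcal{H}$, this forces $\mu = \mu_X$, which is precisely the claim.

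The most delicate point, in my view, is the justification that full stationarity of $r$ (not just constancy of one-dimensional marginals) genuinely persists under the $L^p$ limit $r^{(n)}_m \to r$, since the statement of Theorem \ref{BHRA} involves all finite-dimensional distributions simultaneously. This will be handled by testing against bounded continuous cylindrical functionals on $(r(t_1+h),\ldots,r(t_k+h))$ and passing to the limit in the stationarity identity of Theorem \ref{BHRA} using convergence in probability uniformly on compacts; everything else is a routine combination of the $L^p$-decay of $v(t)$ with the exponential mixing guaranteed by Theorem \ref{t2}.
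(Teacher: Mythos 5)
Your proposal is correct, but it follows a different route from the paper's. The paper's own proof is a two-line argument: stationarity of $r$ (from Theorem \ref{BHRA} and the limit proposition) is asserted to imply that $\mathcal{L}(r(t))$ is an invariant measure for $X$, and then the uniqueness part of Theorem \ref{t2} identifies it with \emph{the} invariant measure. That argument implicitly uses that $r$ is not merely a stationary process but a stationary \emph{solution} of the equation (so that its one-time marginal is a fixed point of the transition semigroup $P_t$); this is true here because $r$ is obtained as the limit of the $r^{(n)}_m$ solving \eqref{eq:rnm}, but the paper does not spell it out. Your argument sidesteps this point entirely: you never need to know that $\mathcal{L}(r(t))$ is invariant a priori. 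Instead you identify both $\mathcal{L}(r(t))$ and $\mu_X$ with the long-time weak limit of $\mathcal{L}(X(t))$ --- the former via the decomposition $X=r+v$ together with $\mathbb{E}|v(t)|^p\to 0$ and the $t$-independence of $\mathbb{E}\psi(r(t))$, the latter via the exponential mixing of Theorem \ref{t2} --- and conclude by the fact that bounded Lipschitz functions separate Borel probability measures on a Polish space. What your approach buys is robustness (it only uses the marginal stationarity of $r$ and the decay of $v$, not the fact that $r$ solves the SDE); what it costs is reliance on the quantitative mixing statement rather than bare uniqueness. Your closing remark about transferring full stationarity through the $L^p$ limit by testing cylindrical functionals is sound, though for this particular theorem only the constancy in $t$ of the one-dimensional marginal $\mathcal{L}(r(t))$ is actually needed, so that step could be trimmed.
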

\begin{proof} It suffices to prove that the law  $\mathcal{L}(r(t))$
of $r(t)$ is an invariant measure for $X$, which is implied by the
stationarity of $r(t)$. Moreover, exploiting the uniqueness of
invariant measures for $X$, see Theorem \ref{t2}, we have that
$\mathcal{L}(r(t))$ is the unique invariant measure for $X$.
\end{proof}

\section*{Acknowledgments}
\noindent  This work was supported by King Fahd University of
Petroleum and Minerals under the project $\sharp\,IN121060$. The
authors gratefully acknowledges this support.\\We thank Stefano
Bonaccorsi and
Luciano Tubaro at the University of Trento for many stimulating discussions.\\
The authors would also like to gratefully acknowledge the great
hospitality of various institutions. In particular for the first
author CIRM and the Mathematics Department of the University of
Trento; for him and the fourth author  King Fahd University of
Petroleum and Minerals at Dhahran; for the second, third  and fourth
authors IAM and HCM at the University of Bonn, Germany.

\medskip

\begin{flushleft}
\footnotesize
{\it S. Albeverio}  \\
 Dept. Appl. Mathematics, University of Bonn,\\
HCM; KFUPM, BiBoS, IZKS\\
\medskip
{\it L. Di Persio}\\
University of Verona, Department of Computer Science, \\
   Italia \\
\medskip
{\it E. Mastrogiacomo}\\
Universit\'a degli studi di Milano Bicocca, Dipartimento di Statistica e Metodi Quantitativi \\
   Piazza Ateneo Nuovo, 1 20126 Milano
\\
\medskip
{\it B. Smii} \\
Dept. Mathematics, King Fahd University of Petroleum and Minerals, \\
Dhahran 31261, Saudi Arabia\\

\medskip
 E-mail: albeverio@uni-bonn.de \\
\hspace{1,1cm} luca.dipersio@univr.it\\
\hspace{1,1cm} elisa.mastrogiacomo@polimi.it\\
\hspace{1,1cm}boubaker@kfupm.edu.sa

 \end{flushleft}
\end{document}